\documentclass[runningheads]{llncs}
\usepackage{bbding}
\usepackage{amsmath}
\usepackage{amsfonts}
\usepackage{enumerate}
\usepackage{epsfig}
\usepackage{hyperref}
\usepackage{color}
\usepackage{lipsum}

\newcommand{\tr}{\triangle}
\DeclareMathOperator{\aff}{aff}
\DeclareMathOperator{\cone}{cone}
\DeclareMathOperator{\port}{port}
\DeclareMathOperator{\rec}{rec}
\DeclareMathOperator{\cycle}{cycle}
\DeclareMathOperator{\ecycle}{ecycle}
\DeclareMathOperator{\row}{row}
\DeclareMathOperator{\spn}{span}
\DeclareMathOperator{\nul}{ker}

\newcommand{\1}{\mathbf{1}}
\newcommand{\0}{\mathbf{0}}
\newcommand{\R}{\mathbb R}
\newcommand{\Z}{\mathbb Z}
\newcommand{\D}{\mathbb D}

\newcommand{\zA}{\mathcal A}
\newcommand{\zB}{\mathcal B}
\newcommand{\zC}{\mathcal C}
\newcommand{\zD}{\mathcal D}
\newcommand{\zP}{\mathcal P}
\newcommand{\zO}{\mathcal O}

\newcommand{\zL}{\mathcal L}
\newcommand{\zF}{\mathcal F}

\newcommand{\zS}{\mathcal S}
\DeclareMathOperator{\col}{col}

\newcounter{claimcounter}

\newenvironment{myclaim}
  {\refstepcounter{claimcounter}%
   \medskip\par\noindent\textit{Claim~\theclaimcounter.}\ }
  {\par\medskip}

\newenvironment{myproof}
  {\setcounter{claimcounter}{0}%
   \begin{proof}}
  {\hfill$\qed$\end{proof}}

\newenvironment{proofofclaim}[1][]
  {\par\noindent\textit{Proof of Claim#1.}\ }
  {\hfill$\diamond$\par\medskip}

\begin{document}

\title{The Local Dyadic Conjecture\thanks{A preliminary version of this article appeared in: Proceedings of the 27th International Conference on Integer Programming and Combinatorial Optimization (IPCO 2026), Padova, Italy, June 17-19, 2026, pp. 144-157.}}
\author{
Mahtab Alghasi \and
Bertrand Guenin \and
Levent Tun\c{c}el \Envelope}
\institute{Department of Combinatorics and Optimization, University of Waterloo\\
\email{ \{malghasi, bguenin, levent.tuncel\} @uwaterloo.ca}}

\maketitle


\begin{abstract}
A rational with denominator a power of two is said to be dyadic. 
A long standing conjecture of Paul Seymour predicts that for an ideal clutter, the dual of the set covering problem always admits an optimal solution where all variables take dyadic values.
We present a local version of this conjecture, and motivate it by proving an important special case.
\end{abstract}

\keywords{Linear Programming, dyadic rationals, binary clutters, polyhedra}

\section{Introduction.}
\subsection{The dyadic conjecture.}
A family of sets $\zC$ over finite ground set $E(\zC)$ is a {\em clutter} if for every $S,S'\in\zC$ where $S\subseteq S'$ we have $S=S'$.
A clutter $\zC$ is {\em trivial} if $\zC=\emptyset$ or $\zC=\{\emptyset\}$.
Throughout this paper we shall assume that all clutters are non-trivial.
Given a clutter $\zC$, $M(\zC)$ denotes the $0,1$ matrix where the rows correspond to the characteristic vectors of members of $\zC$.
\footnote{$M(\zC)$ is only defined up to permutations.}
Consider a clutter $\zC$, and weights $w\in\Z_+^{E(\zC)}$ we consider the following primal-dual pair,
\begin{equation} \label{primal}
\min\{w^\top x:M(\zC)x\geq\1,x\geq\0\}, 
\end{equation}
\begin{equation} \label{dual}
\;\;\max\{\1^\top y: M(\zC)^\top y\leq w, y\geq\0\}. 
\end{equation}
Given a clutter $\zC$ we associate the polyhedron, 
\[
P(\zC):=\{x\geq\0:M(\zC)x\geq\1 \}.\]
The clutter $\zC$ is {\em ideal} if $P(\zC)$ is integral, i.e. every vertex (also known as extreme point) of $P(\zC)$ is in the integer lattice. 
In 1975, Paul Seymour (see \cite{schrijver03} 79.3e) proposed the following conjecture,
\begin{conjecture}\label{dyadic-cn}
Let $\zC$ be an ideal clutter. Then for every $w\in\Z_+^{E(\zC)}$ the linear program \eqref{dual} has an optimal solution that is dyadic.
\end{conjecture}
\noindent
Here, a rational number $\frac{r}{s}$ where $r,s\in\Z$ is {\em dyadic} if $s=2^q$ for some $q\geq 0$ where $q\in\Z$.
A vector is {\em dyadic} if every entry is dyadic.

Dyadic numbers are important for numerical computations because they have a finite binary representation, 
and therefore they can be represented exactly on a computer in floating-point arithmetic.
Checking whether a linear program admits a dyadic optimal solution can be done in polynomial time \cite{abdi24}. For an implementation see \cite{cornuejols25}.
For the set packing problems, integrality of the polytope, $\{x\geq\0:M(\zC)x\leq\1\}$ 
implies the existence of an integral solution to the dual problem
$\min\{\1^\top y:M(\zC)^\top y\geq w, y\geq\0\}$ for every $w\in\Z_+^{E(\zC)}$ \cite{chvatal75}. 
This affirms a one-to-one correspondence between perfect matrices and perfect graphs.
Conjecture~\ref{dyadic-cn} would establish a weak counterpart for set covering problems.

Conjecture \ref{dyadic-cn} is known to hold for a number of special cases, for instance,
(a) ideal clutters of odd cycles of graphs \cite{geelen02},
(b) clutters of $T$-cuts of grafts \cite{seymour78},
(c) clutters of $T$-joins of grafts \cite{abdi22},
(d) clutters of dicuts of a directed graph \cite{lucchesi78},
(e) clutters of dijoins of a directed graph \cite{guenin25} see also \cite{abdi25}.
For (a), (b) \eqref{dual} always have an $\frac12$-integral solution.
For (d) \eqref{dual} always has an integral solution. 
\subsection{A local strengthening of the Dyadic Conjecture.}
Consider a pointed polyhedron $P\subseteq\R^n$.
We say that a pair of vertices $v^1,v^2$ of $P$ are {\em adjacent} if both $v^1,v^2$ are contained in a single face of dimension $1$ of $P$.
We say that a vertex $v$ of a polyhedron $P$ is an {\em integral corner}, if $v$ and all vertices adjacent to $v$ are in $\Z^n$.
Consider now a clutter $\zC$ and weights $w\in\Z_+^{E(\zC)}$. 
Observe that $\min\{w^\top x:x\in P(\zC)\}$ always has an optimal solution and that the set of optimal solutions is a face $F$ of $P(\zC)$.
We say that $F$ is the {\em $w$-optimal face} of $P(\zC)$.
A set $B\subseteq E(\zC)$ is a {\em cover} of $\zC$ if $B\cap S\neq\emptyset$ for every $S\in\zC$.
A cover is $w$-minimum if it minimizes $w(B):=\sum_{j\in B}w_j$ among all covers.
We say that $w$ are {\em minimal} weights of $\zC$ if $w\in\Z_+^{E(\zC)}$ and every $j\in E(\zC)$ is contained in a $w$-minimum cover.

We predict the following.
\begin{conjecture}\label{corner-cn}
Let $\zC$ be a clutter and let $w$ be minimal weights of $\zC$.
If the $w$-optimal face of $P(\zC)$ contains an integral corner then \eqref{dual} has an optimal solution that is dyadic.
\end{conjecture}
\noindent
Observe that for an integral polyhedron, every vertex is an integral corner.
In particular, if $\zC$ is ideal then every vertex of $P(\zC)$ is an integral corner.
Furthermore, it is an easy exercise to show that it suffices to prove Conjecture \ref{dyadic-cn} for minimal weights only.
Hence, Conjecture \ref{corner-cn} implies Conjecture \ref{dyadic-cn}.
However, the former is strictly stronger, as there exists non-ideal clutters with integral corners.
Indeed, let $n\geq 3$ and denote by $\zC_n$ the clutter on the ground set  $\{e_1,\dots, e_n$, $\bar{e}_1,\dots,\bar{e}_n\}$ defined by 
\[
\zC_n
=
\bigl\{\{e_i,\bar{e}_i\} : i\in[n]\bigr\}
\cup
\bigl\{\{\bar{e}_i,\bar{e}_j\} : 1\leq i<j\leq n\bigr\}.
\]
Then, $x=(\frac{1}{2},\dots, \frac{1}{2})$ is a fractional vertex of the covering polyhedron $P(\zC_n)$, whereas  $v=\begin{bmatrix} \mathbf{0}_n |& \mathbf{1}_n \end{bmatrix}$ is an integral corner of $P(\zC_n)$. 

The condition that the optimal face contain an integral corner cannot be omitted.
Consider the case where $\zC$ are the lines of the Fano Matroid,~i.e.
\[
\zC=\{\{1,2,3\}, \{1,4,5\}, \{1,6,7\}, \{2,4,7\}, \{2,5,6\}, \{3,4,6\}, \{3,5,7\}\}.
\]
For minimal weights $w=\1$ the $w$-optimal face contains a unique point $\frac13\1$ and \eqref{dual} has a unique dual solution $\frac13\1$ as well.
It is also not possible to replace in Conjecture~\ref{corner-cn} the condition that ``{\em the $w$-optimal face contains an integral corner}" by ``{\em the $w$-optimal face contains an integral point}".
To see this consider the clutter $\zC$ with, 
\[
M(\zC)=
\begin{pmatrix}
0 &1 & 1 & 1 & 1 & 0 & 0 & 0\\
1 & 0 & 1 & 1 & 0 & 1 & 0 & 0\\
1 & 1 & 0 & 1 & 0 & 0 & 1 & 0\\
1 & 1 & 1 & 0 & 0 & 0 & 0 & 1\\
0 & 0 & 0 & 0 & 1 & 1 & 1 & 1
\end{pmatrix}.
\]
Let $w=\1$ and let $F$ denote the $w$-optimal face.
It can be readily checked that \eqref{dual} has a unique optimal solution $y=(\frac13,\frac13,\frac13,\frac13,\frac23)^\top$.
The vertices of $F$ are $v^1=(1,0,0,0,1,0,0,0)$, $v^2=(0,1,0,0,0,1,0,0)$, $v^3=(0,0,1,0,0,0,1,0)$, $v^4=(0,0,0,1,0,0,0,1)$.
We claim that none of $v^i$, $i\in[4]$ is an integral corner.
Because of symmetry it suffices to consider $v^1$, but $v^1$ is adjacent to $(\frac12,\frac12,\frac12,0,0,0,0,1)^\top$.
\subsection{A special case of Conjecture \ref{corner-cn}.}
The set of all inclusion-wise minimal covers of a clutter $\zC$ forms another clutter called the {\em blocker} of $\zC$.
We denote by $b(\zC)$ the blocker of $\zC$.
The blocking operation is idempotent, i.e., $b(b(\zC))=\zC$ \cite{Lehman64,Edmonds70}.
A clutter $\zC$ is said to be {\em binary} if for every $S\in\zC$ and every $B\in b(\zC)$ we have $|S\cap B|$ odd.
Cases (a)-(c) of Conjecture \ref{dyadic-cn} are all instances of binary clutters.
The restriction of Conjecture \ref{dyadic-cn} to binary clutters remains open and is therefore of particular interest.

A main corollary of this paper is the following special case of Conjecture~\ref{corner-cn}.
\begin{theorem}\label{main-th}
Let $\zC$ be a binary clutter and let $w$ be minimal weights of $\zC$.
If the $w$-optimal face of $P(\zC)$ contains a $(k,\ell)$-degenerate integral corner where $k(\ell+1)\leq 6$ then \eqref{dual} has an optimal solution that is dyadic.
\end{theorem}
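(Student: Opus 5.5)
The plan is to turn the question into one about a small linear system attached to the corner, and then settle that system by a finite analysis of binary configurations. Throughout, I take the natural reading of $(k,\ell)$-degeneracy, which has not yet been defined at this point: roughly, $k$ measures how many tight constraints at $v$ exceed the dimension, and $\ell$ bounds how the adjacent vertices interact with the tight rows. If the paper's definition differs, the bookkeeping in the middle step would need to be adjusted.

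\emph{Step 1: the corner as a cover.} Let $v$ be the integral corner in the $w$-optimal face $F$. Since $v$ is a $0,1$ vertex of $P(\zC)$, it is the characteristic vector of a minimal cover $B\in b(\zC)$, and $B$ is $w$-minimum. Because $w$ is minimal, every $j\in E(\zC)$ lies in some $w$-minimum cover. Hence some optimal point of $F$ has full support, and complementary slackness forces $M(\zC)^\top y=w$ for every optimal $y$ of \eqref{dual}.

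\emph{Step 2: the tight members and their structure.} Complementary slackness at $v$ also says that every optimal $y$ is supported on the members $S$ with $|S\cap B|=1$; more precisely, on the members tight at a relative-interior point of $F$. So the dual optimal set is
\[
D=\{y\geq\0:\ M_T^\top y=w\},
\]
where $M_T$ is the submatrix of rows tight on all of $F$. This is a nonempty polytope, and the goal is to exhibit a dyadic point in it. Each vertex adjacent to $v$ is integral, hence equals $\chi^{B'}$ for another cover $B'$. I would use binarity to describe the edges: the symmetric difference $B\triangle B'$ is a cycle of the binary matroid representing $b(\zC)$. This should give a description of the tight members near $v$ as vectors over $GF(2)$ with prescribed parities on the edge directions.

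\emph{Step 3: reduction to a bounded configuration.} The tight rows at $v$ are $n+k$ in number, with $k$ the degeneracy. The edge directions at $v$ form a simple cone, since $v$ is a corner. I would pick $n$ tight rows forming a basis, express $M_T$ relative to it, and show that $D$ is cut out inside a $k$-dimensional affine family. In that family the coefficients come from the $GF(2)$ structure of Step 2, with at most $\ell+1$ relevant blocks per extra row. The bound $k(\ell+1)\leq 6$ then limits the size of the essential submatrix to at most $6$ rows and columns of a specific binary shape.

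\emph{Step 4: the finite check, which is the main obstacle.} I would classify the possible binary configurations under the bound $k(\ell+1)\leq 6$, using symmetry and the fact that binary clutters exclude the relevant non-binary minors. For each configuration I would verify that some vertex of $D$ is determined by a subsystem whose determinant is $\pm 2^q$. Cramer's rule then makes that vertex dyadic. I expect this step to be the hardest, since it requires controlling all $GF(2)$ configurations that can arise. My first attempt would be induction on $k$: contract or delete an element to lower the degeneracy while preserving binarity and minimality of the weights, and reduce to the case $k=1$. In that case $D$ is a segment whose endpoints have determinants dividing $2^{\ell+1}$.
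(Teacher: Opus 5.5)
Your proposal has a genuine gap, and it begins with the definition you guessed. In the paper, $v$ is $(k,\ell)$-degenerate if it lies on $n+k$ one-dimensional faces and has $n+\ell$ tight constraints. So the tight rows at $v$ number $n+\ell$, not $n+k$. The tangent cone at $v$ is simplicial only when $k=0$: being an integral corner says the neighbours of $v$ are integral, not that $v$ is simple. The dual optimal set $D$ has dimension at most $\ell$, since its affine hull is $\{y:M_T^\top y=w\}$ and $T$ is a subset of the $n+\ell$ rank-$n$ tight rows at $v$. For $k=1$ the hypothesis still allows $\ell$ up to $5$, so your base case ``$D$ is a segment'' is not the right one.

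More seriously, Step~3 asserts that $k(\ell+1)\le 6$ reduces everything to an essential submatrix with at most $6$ rows, but you give no mechanism. In fact the number of active dual variables is not bounded by any function of $(k,\ell)$. In the paper's example $\zO(K_n)$ the corner $\bar v$ is non-degenerate. Take the minimal weights $w_{ij}=1$ for $2\le i<j$ and $w_{1i}=n-2$. Then the all-ones vector on the triangles through vertex $1$ is dual optimal, the $w$-optimal face contains $\bar v,v^2,\dots,v^n$, and all $\binom{n-1}{2}$ of those triangles are tight on it. Whether $D$ contains a dyadic point is a lattice question about the whole system $M_T^\top y=w$, which is unboundedly large. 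Knowing how a few excess rows are expressed in a basis does not decide it unless you also control that basis, and your sketch never uses the corner hypothesis to do so. Step~4 is then left open. For the induction on $k$ you give no argument that contraction or deletion preserves the integral corner and the minimality of $w$, or that it lowers the degeneracy.

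The missing idea is how degeneracy actually enters. The paper never bounds the dual system at $\bar F$. Instead it climbs a chain of immediate superfaces $\bar F=F_0\subset F_1\subset\cdots$ and pushes the DGSS property back down (Theorem~\ref{tool}). Each step needs an integral $\rho\in\aff(F_t)\setminus\aff(F_{t-1})$ whose slack in a dropped constraint $x(S)\ge 1$ is a power of two. Binarity supplies this through Propositions~\ref{3-inter} and~\ref{3-neighbour}: for every tight $S$ there is an edge at $v$ carrying an integral point $\rho$ with $\rho(S)\in\{2,3\}$. $v$-compatibility guarantees that one of these edges enters an immediate superface. Proposition~\ref{reduction} shows that every face containing $v$ with more than $k(\ell+1)$ tight constraints is $v$-compatible. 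The reason is that a saturating set avoiding the immediate superfaces has at most $k$ edges, each non-tight for at most $\ell+1$ constraints. A maximal such chain must therefore end at a compliant face: a non-compliant top face would have $\sigma\ge 7$ by Proposition~\ref{small}, and so could be extended. By Theorem~\ref{density}, only a dyadic point of $\aff(D)$ is needed, not a dyadic vertex of $D$ as in your Step~4. The constant $6$ is the reach of the small-case analysis, not a size bound that degeneracy imposes on the dual.
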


A vertex $v$ of a full-dimensional polyhedron $P=\{x:Ax\leq b\}\subseteq \R^n$, where the system $Ax \leq b$ is irreducible,  is {\em $(k,\ell)$-degenerate} if
(i) $v$ is contained in $n+k$ faces of dimension $1$ of $P$, and
(ii) there are $n+\ell$ tight constraints of $Ax\leq b$ for $v$.

Observe that a $(0,0)$-degenerate vertex is non-degenerate.
For the case where the integral corner is non-degenerate, it can be readily shown that we in fact 
get an integral solution for \eqref{dual} as the left-hand-side of the tight constraints must be unimodular~\cite{personal}.

We observed that there exists non-ideal clutters with integral corners. We next show that this is also true for binary clutters.
Given a graph $G=(V,E)$ denote by $\zO(G)$ the clutter with groundset $E$ where the member of the $\zO(G)$ are the cycles with an odd number of edges.
It is well known that $\zO(G)$ is binary.
\begin{proposition}
For $n\geq 5$, $\zO(K_n)$ is a non-ideal clutter with integral corners.
\end{proposition}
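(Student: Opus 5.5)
Two things have to be shown for $\zO(K_n)$ with $n\geq 5$: that the clutter is not ideal, and that $P(\zO(K_n))$ has an integral corner.

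\emph{Non-ideality.} I would exhibit a fractional vertex; the natural candidate is $x=\frac13\1$. Every odd cycle has at least $3$ edges, so $x\in P(\zO(K_n))$. The constraints tight at $x$ are exactly the triangles. To see that these tight rows have rank $\binom n2$, I would show that each unit vector $\chi^{e}$ lies in the span of triangle incidence vectors. Take $e=uv$ and two further vertices $a,b$. Then
\[
(T_{uva}+T_{uvb}-T_{uab}-T_{vab})+\dots
\]
yields combinations whose sum isolates $2\chi^{uv}-2\chi^{ab}$. Combining this with the identity obtained after exchanging roles produces $\chi^{uv}$ itself. This is where $n\geq 5$ is used, since enough disjoint vertices are needed. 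Alternatively, one can just note that the triangle–edge incidence matrix of $K_n$ has full column rank for $n\geq 5$, via its Gram matrix $M^\top M = (n-2)I + A(L(K_n))$. This matrix is nonsingular because the eigenvalues of the line graph of $K_n$ are $2n-4$, $n-4$ and $-2$, giving $(n-2)+(-2)=n-4\neq 0$ for $n\geq5$. Hence $\frac13\1$ is a vertex, and it is fractional.

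\emph{Integral corner.} I would pick a very degenerate-free integral vertex, namely $v=\chi^{\delta(u)}$ for a vertex $u$ of $K_n$. Deleting the edges of $\delta(u)$ leaves $K_{n-1}$ minus... rather, a complement of a cut, so $\delta(u)$ is a minimal cover, i.e.\ a member of $b(\zO(K_n))$; such points are vertices of $P$. Better still, I would take the vertex $\chi^{E\setminus E(K_{S})\ldots}$, but the cleanest choice is $v=\chi^{B}$ where $B$ is a minimal odd-cycle cover. Its tight constraints are the odd cycles meeting $B$ exactly once, together with the $x_e\geq 0$ for $e\notin B$. To prove $v$ is an integral corner, I would describe each edge of $P$ at $v$ as the set where $n'-1$ linearly independent tight constraints remain tight, parametrize the ray $v+t d$, and show that its other endpoint (if any) is $\chi^{B'}$ for another cover, or that the direction is a recession direction $\chi^e$.

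A useful simplification is that an integral corner is equivalent to a local integrality condition. The cone at $v$ intersected with $P$ has integral vertices adjacent to $v$ if each extreme direction $d$ of the tangent cone, chosen as the minimal integer vector, satisfies: the first constraint hit along $v+td$ is hit at $t=1$. For $B=\delta(u)$, the tight odd cycles are those passing through $u$ using exactly two edges of $\delta(u)$ and an even path in $K_n-u$. The directions can be computed by symmetry: essentially $\chi^{f}$ for $f\notin B$, and $\chi^{C}-\chi^{\delta(u)}$-type exchanges, where $C$ is another cut $\delta(S)$ with $u$ switched. Moving along such a direction lands at $\chi^{B\triangle \delta(S)}$, which is again a cover.

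The main obstacle is this enumeration of edges of $P$ at $v$. Here I would use the binary structure: the signing/switching by cuts maps covers to covers. Switching on $\delta(u)$ reduces the computation to the vertex $\0$ of the "resigned" polyhedron. At that vertex, tight constraints are the even cycles through $u$, and the extreme rays of the resulting cone can be identified directly.
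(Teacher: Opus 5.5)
Your non-ideality argument is correct and is more self-contained than the paper's, which simply cites the odd-$K_5$ excluded-minor characterization. The point $\frac13\1$ is feasible, its tight constraints are the triangles, and the Gram matrix $(n-2)I+A(L(K_n))$ has eigenvalues $3n-6$, $2n-6$, $n-4$, all positive for $n\ge 5$. So $\frac13\1$ is a fractional vertex.

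The integral-corner half, however, has a genuine gap.

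\textbf{The candidate vertex is infeasible.} Your main candidate $\chi^{\delta(u)}$ is not a point of $P(\zO(K_n))$. A cut meets every cycle in an even number of edges, so at $x=\chi^{\delta(u)}$ we get $x(C)=|C\cap\delta(u)|\in\{0,2\}$ for every odd cycle $C$. Every triangle avoiding $u$ is therefore violated, and no odd-cycle constraint is tight. Your description of the ``tight odd cycles'' and of exchanges landing at $\chi^{B\triangle\delta(S)}$ (which for $B=\delta(u)$ is again a cut, not a cover) therefore concerns a non-point. You appear to have swapped the cut with its complement.

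\textbf{The choice of $B$ matters.} Falling back on ``$v=\chi^{B}$ for some minimal odd-cycle cover $B$'' does not rescue the argument. The vertices and bounded edges of the pointed polyhedron $P$ form a connected graph. Since $\frac13\1$ is a fractional vertex, some integral vertex is adjacent to a fractional one and is not an integral corner.

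\textbf{The main step is not done.} Listing the $1$-dimensional faces at $v$ and locating their other endpoints carries all the content, but it is only announced (``by symmetry'', ``can be identified directly''). The switching-to-$\0$ picture is not carried out. As stated (``tight constraints are the even cycles through $u$'') it does not match the actual tight set.

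\textbf{The missing idea is non-degeneracy.} Choose a vertex at which edge enumeration becomes mechanical. Take $B=E\setminus\delta(u)$, the edge set of $K_n-u$; this is the paper's choice.
\begin{itemize}
\item An odd cycle avoiding $u$ lies inside $B$, and an odd cycle through $u$ meets $B$ in its length minus two edges.
\item Hence the constraints tight at $v=\chi^{B}$ are exactly the $n-1$ constraints $x_{ui}\ge 0$ and the $\binom{n-1}{2}$ triangles $x_{ij}+x_{ui}+x_{uj}\ge 1$.
\item These $\binom{n}{2}$ rows are linearly independent: subtracting the rows $\1_{ui}$ and $\1_{uj}$ from the triangle rows yields all unit vectors.
\item So $v$ is non-degenerate, and each $1$-dimensional face at $v$ arises by relaxing exactly one tight constraint.
\end{itemize}
Now compute the two kinds of faces.
\begin{itemize}
\item Relaxing the triangle $uij$ gives the ray $v+\R_+\1_{ij}$.
\item Relaxing $x_{ui}\ge 0$ gives the direction $\1_{ui}-\sum_{j\ne u,i}\1_{ij}$. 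The segment from $v$ to $\chi(E\setminus\delta(\{u,i\}))$ lies in $P$, since both ends are covers. It cannot be extended, because the coordinates $x_{ij}$ would become negative.
\end{itemize}
Thus every vertex adjacent to $v$ is integral, and $v$ is an integral corner. Without identifying a vertex with this non-degeneracy property, your plan has no mechanism for controlling the edges at $v$.
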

\begin{proof}
For any graph $G$, $\zO(G)$ is ideal if and only if $K_5$ cannot be obtained from $G$ by first deleting edges and then contracting all edges in a cut \cite{Guenin01}.
It follows in particular, that $\zO(K_n)$ is non-ideal for every $n\geq 5$.
Denote the vertices of $K_n$ by $\{1,\ldots,n\}$ and let $B$ the set of edges in the clique induced by vertices $2,\ldots,n$.
Observe that $B\in b(\zO(K_n))$; hence, the characteristic vector $\bar{v}$ of $B$ is a vertex of $P(\zO(K_n))$ (\cite{cornuejols01} Remark 1.16).
We will show that $\bar{v}$ is an integral corner of $P:=P(\zO(K_n))$, thereby completing the proof.

Let $m=|E(\zO(K_n))|$ and observe that $m=\binom{n}{2}$.
The tight constraints at $\bar{v}$ are
(i) the non-negativity constraints $x_{1i}\geq 0$ for $i\in\{2,\ldots,n\}$; and
(ii) the odd-cycles constraints $x_{ij}+x_{i1}+x_{j1}\geq 1$ for distinct pairs $i,j\in\{2,\ldots,n\}$.
These $n-1+\binom{n-1}{2}=m$ constraints have linearly independent left-hand-sides, thus $\bar{v}$ is a non-degenerate vertex of~$P$.
Consequently, the $m$ faces of dimension $1$ incident with $\bar{v}$ in $P$ are obtained by relaxing one tight inequality (i) or (ii) at a time. 
Consider (ii) and let $i,j\in\{2,\ldots,n\}$. Relaxing $x_{ij}+x_{i1}+x_{j1}\geq 1$ yields a ray $\bar{v}+ \R_+ \mathbf{1}_{ij}$ of $P$.
Consider (i) and let $i\in\{2,\ldots,n\}$. 
Let $B_i$ be the set of edges in the clique formed by vertices $\{2,\ldots,n\}\setminus i$ together with the edge $1i$.
Observe that $B_i\in b(\zO(K_n))$, hence the characteristic vector $v^i$ of $B_i$ is a vertex of $P$.
Then $\bar{v}$ and $v^i$ are adjacent, since they are both contained in a face of dimension $1$ of $P$.
But then $v^2,\ldots,v^n$ are the only vertices adjacent to $\bar{v}$, in particular $\bar{v}$ is an integral corner.
\qed
\end{proof}

\subsection{Organization of the paper.}
Section \ref{sec-tech} presents a general result, Theorem \ref{main-tech}, and explains how Theorem \ref{main-th} follows as a corollary.
Section \ref{sec-binary} proves a result about binary clutters that might be of independent interest.
Section \ref{sec-small} proves that \eqref{dual} has a half-integer solution, when there are no more than $6$ active dual variables and the clutter is binary.
In Section \ref{sec-proof} we leverage these results, and a general tool on dyadic linear programs to prove Theorem \ref{main-tech}.
We end with closing remarks in Section \ref{sec-closing}.
\section{The general result.}\label{sec-tech}
Before we can proceed we require a number of definitions.
Consider a full-dimensional polyhedron $P=\{x:Ax\leq b\}$ with a vertex $v$ and a proper face $F$ that contains $v$.
Let $\bar{A}x\leq\bar{b}$ denote the implicit equalities for $F$.
A set $\zL$ of faces of dimension $1$ containing $v$ is {\em $(F,v)$-saturating} if no constraint of $\bar{A}x\leq\bar{b}$ is tight for every $L\in\zL$.
Observe that this is a property of the polyhedron, not of the system of linear constraints.

If $F,F'$ are faces of $P$ where $F'\supset F$ then $F'$ is a {\em superface} of $F$.
If in addition $\dim(F')=\dim(F)+1$ then $F'$ is an {\em immediate superface} of $F$.
Denote by $\zL'$ the set of faces of dimension $1$ containing $v$ and that 
contain a point distinct from $v$ in some immediate superface of $F$.
Then $F$ is {\em $v$-compatible} if $\zL'$ intersects every $(F,v)$-saturating set.
%
\begin{figure}[htb!]
\vspace{-0.1in}
\centering\includegraphics[scale=0.6]{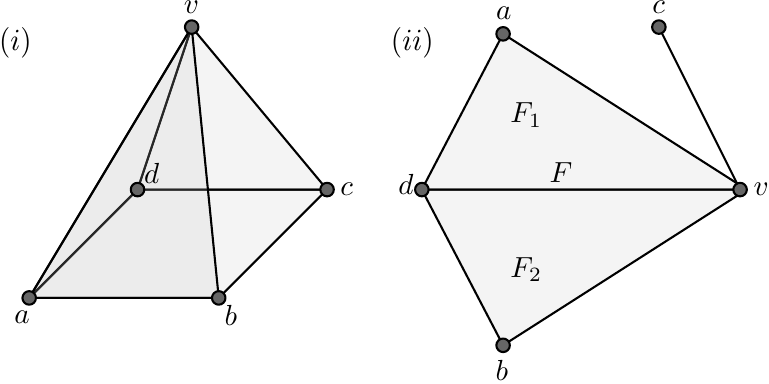}
\caption{Illustrating definitions.}\label{fig-def}
\end{figure}
We illustrate these definitions in Figure~\ref{fig-def}. 
For both (i) and (ii) denote by $L_a, L_b, L_c, L_d$ be the line segment between $v$ and $a,b,c,d$ respectively.
Consider (i).
The point $v$ is $(1,1)$-degenerate.
Pick $F=\{v\}$. 
Then the sets $\{L_a,L_c\}$ and $\{L_b,L_d\}$ are both $(F,v)$-saturating and every other $(F,v)$-saturating set is a superset of these sets.
The immediate superfaces of $F$ are $L_a,L_b,L_c$ and $L_d$.
The set $\zL'$ defined above is $\{L_a,L_b,L_c,L_d\}$.
In particular, $F$ is $v$-compatible. 
Indeed, whenever $F$ consists of a single point $v$, then it is always $v$-compatible.
Consider (ii). 
Pick $F=L_d$. 
Then the sets $\{L_a,L_b\}$ and $\{L_c\}$ are both $(F,v)$-saturating.
The immediate superfaces of $F$ are the triangles $F_1$ and $F_2$.
The set $\zL'$ defined above is $\{L_a,L_b\}$.
Since $\{L_c\}\cap\zL'=\emptyset$, $F$ is not $v$-compatible.

Suppose $P$ is full dimensional and determined by an irreducible system $Ax\leq b$.
Given a proper face $F$ of $P$, we denote by $\sigma(F,P)$ the number of tight constraints of $Ax\leq b$ for $F$.
Note, that $\sigma(F,P)$ is independent of the choice of $Ax\leq b$.
When clear from context, we write $\sigma(F)$ for $\sigma(F,P)$.

We can now state the main result of the paper.
\begin{theorem}\label{main-tech}
Let $\zC$ be a binary clutter and let $w$ be minimal weights of $\zC$.
Denote by $\bar{F}$  the $w$-optimal face of $P(\zC)$ and let $v$ be an integral corner of $P(\zC)$ contained in $\bar{F}$.
If every face $F\supseteq \bar{F}$ for which $\sigma(F,P(\zC))\geq 7$ is $v$-compatible then \eqref{dual} has an optimal solution that is dyadic.
\end{theorem}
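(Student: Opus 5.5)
We argue by induction on the dimension of the support of a dual optimal solution. The tool is the dyadic LP lemma the paper alludes to: if $P$ is a polyhedron and $F$ a face that is the $w$-optimal face, the set of optimal dual solutions of \eqref{dual} is $\{y\ge \0: M(\zC)^\top y = w \text{ on the tight coordinates},\ y_S=0 \text{ for } S \text{ not tight on } \bar F\}$. Call a member $S$ \emph{active} if the constraint $\sum_{j\in S}x_j\ge 1$ is tight on $\bar F$. Any face $F\supseteq \bar F$ has its tight rows among those of $\bar F$. The plan is to move up the face lattice from $\bar F$ toward larger faces, i.e.\ fewer tight constraints, until we reach a face with $\sigma\le 6$. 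On such a face, Section~\ref{sec-small} gives a half-integral solution of the restricted dual, which is in particular dyadic. At each step we pass from a face $F$ to an immediate superface $F'$ and use a dyadic combination to transport a dyadic solution back.

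The key steps, in order, are as follows.

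First, we reduce to the following form. For a face $F\supseteq\bar F$, let $w_F$ be a weight vector whose optimal face is $F$; for example, $w_F$ is the sum of the tight rows of $F$, plus the nonnegativity unit vectors. We show that the dual for $w$ has a dyadic optimum provided that, for every immediate superface $F'$, there is a dyadic solution supported on the tight constraints of $F'$ which is positive in the right directions.

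Second, we use the integral corner $v$. The edges at $v$ have integral other endpoints or are integral rays. Each edge $L\in\zL'$ that lies in an immediate superface $F'$ gives an integral direction $d_L$ with $w^\top d_L$ determined by the dual. By binarity, the result of Section~\ref{sec-binary} says that, at an integral vertex of $P(\zC)$ for a binary clutter, the tight rows are controlled modulo $2$ by the covers. So the differences $w^\top d_L$ and the slacks $M(\zC)d_L$ are integral, and parity arguments give denominators that are powers of $2$.

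Third, $v$-compatibility is used to show that the dual solutions obtained on the immediate superfaces (by induction) can be combined. Every $(F,v)$-saturating set meets $\zL'$. Dually, this means no implicit equality of $F$ can be forced to carry a non-dyadic weight: every tight row of $F$ is slack on some edge of $\zL'$, hence absent from some superface dual. An averaging of the superface solutions with weights $\frac12,\frac14,\dots$ then yields a dyadic solution for $F$ with full support on the required rows. The general dyadic tool from Section~\ref{sec-proof} states that if the dual optimal set contains dyadic points and the lattice generated by tight rows is "2-adically saturated", then a dyadic optimum exists. We verify its hypothesis via the integral edge directions $d_L$.

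Finally, the base case is $\sigma(F)\le 6$. Because the integral corner is there, the number of active dual variables is at most $6$, and the half-integrality result of Section~\ref{sec-small} for binary clutters applies.

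The main obstacle is the third step: converting the purely geometric condition of $v$-compatibility into the algebraic statement that the dyadic solutions on the immediate superfaces glue to one for $F$. I would first try the following. Write each implicit equality $a_i^\top x=b_i$ of $F$ and pick, via compatibility, an edge $L_i\in\zL'$ with $a_i^\top d_{L_i}>0$. Then express the unknown dual multiplier $y_i$ as $w^\top d_{L_i}$ minus integer combinations of other multipliers, and induct on $i$, ordering rows so that each step introduces only a factor of $2$ (using binarity for the parity).
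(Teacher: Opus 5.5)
\textbf{There is a genuine gap.} Your overall picture is the right one: climb from $\bar F$ through immediate superfaces until Proposition~\ref{small} applies, using binarity and $v$-compatibility at each step. But the two steps that actually carry the proof are missing or wrong.

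\textbf{First gap: where the powers of two come from.} Binarity only gives that $|B\cap S|$ is odd. So along an edge $[v,\chi(B)]$ at the corner, the slack $|B\cap S|-1$ of a tight row $x(S)\ge 1$ is even. A slack of $6$ already introduces a factor $3$, so no parity argument yields dyadic denominators. Section~\ref{sec-binary} also does not assert that tight rows are ``controlled modulo $2$''. What is needed is this: for every tight member $S$ of the current face there is an integral point $\rho$ on an edge at $v$ with $\rho(S)-1\in\{1,2\}$. Either there is a ray $v+\R_+\1_j$ with $j\in S$, or, if all rays lie in $\{x: x(S)=1\}$, there is an adjacent vertex $\chi(B')$ with $|S\cap B'|=3$ exactly. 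The latter is Proposition~\ref{3-neighbour}, the geometric form of the rooted quad lemma Proposition~\ref{3-inter}. This is the one place binarity is used, and your proposal never invokes it.

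\textbf{Second gap: $v$-compatibility and the gluing step.} You misread $v$-compatibility. It does not say that every tight row of $F$ is slack on some edge of $\zL'$ (the edges at $v$ meeting an immediate superface). It says every $(F,v)$-saturating set meets $\zL'$; equivalently, some implicit equality of $F$ is tight on all edges outside $\zL'$. The paper uses it exactly once per step. The edges from the previous paragraph, one per tight member, form a saturating set. Hence one of them, with its marked row $S$ and point $\rho$, lies in some immediate superface $F'$; one edge and one row suffice.

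Your gluing step has no valid mechanism. Averaging superface dual solutions with weights $\frac12,\frac14,\dots$ is meaningless, since they solve $A^\top y=w'$ for different $w'$. Your row-by-row recursion for $y_i$ also fails, because an edge direction generally makes several rows slack at once.

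\textbf{The paper's actual mechanism is lattice-theoretic.} Write $\zA_F$ for the tight rows of $F$. Since $F'$ is an immediate superface, $\spn(\zA_F)=\spn(\zA_{F'})+\R\chi(S)$. So for any integral $w^0\in\spn(\zA_F)$ one has $w^0-u\chi(S)\in\spn(\zA_{F'})$, where $u=r^\top w^0/\chi(S)^\top r$ and $r=\rho-v$. Here $u$ is dyadic because $\chi(S)^\top r\in\{1,2\}$.

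Each step changes the objective, so the statement propagated down the chain must be uniform in the weight: the tight rows form a dyadic generating set for their span. This has three consequences, none of which appears in your proposal:
\begin{itemize}
\item The top face must be \emph{compliant}, i.e.\ have a dyadic dual optimum for every $w'$ whose optimal face contains it. Proposition~\ref{small} supplies this, after checking that such $w'$ is minimal.
\item One takes a maximal chain and uses the hypothesis at faces with $\sigma\ge 7$ to show that its top is compliant.
\item Theorem~\ref{density} is needed at the bottom to turn a dyadic linear representation of $w$ into a nonnegative dyadic optimal dual.
\end{itemize}
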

We will show this implies Theorem~\ref{main-th}. We first require.
\begin{proposition}\label{dim-count}
Let $P$ be a pointed polyhedron of dimension $n$ and let $F$ be a face of $P$ of dimension $d$.
Then there exists at least $n-d$ immediate superfaces containing $F$.
\end{proposition}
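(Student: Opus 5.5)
We reduce to the case of a polytope by working in a small neighbourhood of $F$, and then argue by induction on the codimension $n-d$.

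\emph{Setup.} Since $P$ is pointed, $F$ is pointed as well, so $F$ has a vertex $u$. Every face of $P$ containing $F$ contains $u$. Let $\zF_u$ be the face lattice of $P$ restricted to faces containing $u$. This interval of the face lattice is isomorphic to the face lattice of a vertex figure $P/u$. To build $P/u$, intersect $P$ with a hyperplane $H$ that strictly separates $u$ from all other vertices of $P$ and meets every ray of $P$ emanating from $u$. Such an $H$ exists because $u$ is a vertex: take $c$ with $u$ the unique minimizer of $c^\top x$ over $P$, and use $H=\{c^\top x=c^\top u+\epsilon\}$ for small $\epsilon>0$. The faces of $P$ strictly containing $u$ correspond bijectively to the nonempty faces of $P\cap H$, with dimension dropping by exactly one. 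Some care is needed here, since $P$ may be unbounded. Even so, $P\cap H$ is bounded: if it contained a ray direction $r$, then $c^\top r=0$ and $r\in\rec(P)$, and $u+r$ would also minimize $c^\top x$, contradicting uniqueness. Hence $Q:=P\cap H$ is a polytope of dimension $n-1$.

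\emph{Transfer.} The face $F$ corresponds to a face $G=F\cap H$ of $Q$ of dimension $d-1$. Immediate superfaces of $F$ in $P$ correspond to immediate superfaces of $G$ in $Q$. So it suffices to prove the following: in a polytope $Q$ of dimension $m$, a face $G$ of dimension $e$ has at least $m-e$ immediate superfaces. The degenerate case $F=\{u\}$, with $d=0$, corresponds to $G=\emptyset$. Its immediate superfaces are the vertices of $Q$, and a polytope of dimension $n-1$ has at least $n$ vertices, as required.

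\emph{Polytope case.} For the polytope claim we use the standard fact that the quotient of the face lattice, the interval $[G,Q]$, is the face lattice of a polytope of dimension $m-e-1$. This follows by iterating the vertex-figure construction, or by taking the face figure $Q/G$. Its vertices (its atoms) are exactly the immediate superfaces of $G$. A polytope of dimension $m-e-1$ has at least $m-e$ vertices, since its affine hull needs that many points. This gives the bound.

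Alternatively, one can avoid quotient polytopes with a direct argument. Let $a_i^\top x\le b_i$, for $i\in I$, be the constraints tight on $F$. Then $\operatorname{rank}\{a_i\}_{i\in I}=n-d$. Each immediate superface $F'$ is obtained by dropping some of these constraints, and every face containing $F$ is the join of immediate superfaces. If there were fewer than $n-d$ immediate superfaces, their join would have to be $P$; counting dimensions along a maximal chain shows that each join step raises the dimension by at most one, giving a contradiction.

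The main obstacle is the unbounded case: one must check that the vertex figure is a genuine polytope and that the lattice isomorphism preserves dimensions. This is handled by the choice of $H$ above.
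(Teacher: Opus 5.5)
Your main argument is correct, and its last step is the same as the paper's. Both reduce to a polytope and apply the fact (Ziegler, Theorem~2.7) that an interval $[G,Q]$ of a polytope's face lattice is the face lattice of a polytope of dimension $\dim Q-\dim G-1$, whose vertices are the immediate superfaces of $G$.

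The two routes differ in how unboundedness is removed. The paper intersects $P$ with a large box $B$ that has every vertex of $P$ in its interior. No box constraint is tight on a face containing a vertex of $P$, so faces $G\supseteq F$ of $P$ correspond to faces $G\cap B\supseteq F\cap B$ of $P\cap B$ of the same dimension.

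You instead pass to the vertex figure at a vertex $u$ of $F$. Your boundedness argument for $Q=P\cap H$ is exactly what is needed. The two facts you leave implicit also follow from your choice of $\epsilon$:
\begin{itemize}
\item A face of $P$ meeting $H$ contains $u$. The minimum of $c^\top x$ over that face is attained at a vertex $v$ of $P$ with $c^\top v\le c^\top u+\epsilon$, and that vertex can only be $u$.
\item A face $G\ni u$ other than $\{u\}$ has a second vertex or a nonzero recession direction, hence points strictly beyond $H$. So $H$ meets its relative interior, and $\dim(G\cap H)=\dim G-1$.
\end{itemize}
The vertex-figure route gives an order isomorphism with a uniform dimension shift, and it treats $d=0$ uniformly via $G=\emptyset$. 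The box route avoids choosing $u$ and keeps dimensions fixed, at the price of working with truncations of possibly unbounded faces.

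Your ``alternative'' paragraph, however, should be dropped or repaired. It is not true that joining one more immediate superface raises the dimension by at most one. Let $P$ be the pyramid over a $3$-cube and $F$ its apex. The smallest face containing the edges from the apex to two antipodal cube vertices is $P$ itself, so a single join goes from dimension $1$ to dimension $4$.

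What does hold is the affine-hull bound $\dim\aff(F\cup F_1\cup\cdots\cup F_k)\le d+k$ for immediate superfaces $F_1,\ldots,F_k$ of $F$. Finishing along those lines would also require that all immediate superfaces of $F$ together affinely span $\aff(P)$. That needs its own justification, for example a tangent-cone argument in the spirit of Proposition~\ref{cone}.
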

\begin{proof}
We first consider the case when $P$ is a polytope.
Denote by $L(P)$ the face poset of $P$.
It is a graded lattice with rank function $r(G)=\dim(G)+1$ for faces $G$.
The interval $[F,P]$ of $L(P)$ is the face lattice of a polytope $P'$ of dimension $r(P)-r(F)-1=n-d-1$ (see \cite{Ziegler1995} Theorem 2.7).
$F=\emptyset$ in $P'$ and every immediate superface of $F$ is a vertex of $P'$.
Since $\dim(P')=n-d-1$, $P'$ must have at least $n-d-1+1$ vertices (equality holds for simplexes).
Hence, $F$ is contained in at least $n-d$ immediate superfaces, as required.

Consider now the general case where $P$ need not be a polytope.
Pick $M\in\Z_+$ large enough such that every vertex of $P$ is contained in the interior of the box $B:=\{x:-M\1 \leq x \leq M\1\}$.
Describe $P$ by an irreducible set of inequalities $Ax\leq b$ where $A$ has index rows $[m]$.
If $G$ is a face of $P$, then there exists $I_G\subseteq[m]$ such that $G=P\cap\{x:\row_i(A)x=b_i, i\in I_G\}$. 
Moreover, if $G$ is a face of $P\cap B$ where $G\cap F \neq \emptyset$, then none of the constraints of $B$ are tight for $F$ as $F$ must contain a vertex of $P$.
Hence, there exists a bijection between faces $G\supseteq F$ of $P$ and faces $G'\supseteq F$ of $P\cap B$.
Therefore, we can count the number of superfaces of $F$ in $P\cap B$ rather than $P$ and use the aforementioned result for polytopes.
\qed
\end{proof}
\noindent
Next we tie degeneracy to the number of active dual variables.
\begin{proposition}\label{reduction}
Let $P\subset\R^n$ be a full dimensional polyhedron.
Let $v$ be a $(k,\ell)$-degenerate vertex of $P$ where $k(\ell+1)\leq\alpha$ for some $\alpha\in\Z_+$.
Let $F\ni v$ be a face of $P$ with $\sigma(F,P)>\alpha$.
Then $F$ is $v$-compatible.
\end{proposition}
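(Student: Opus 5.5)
The plan is to argue by contradiction and count. Suppose $F$ is not $v$-compatible. Then there is an $(F,v)$-saturating set $\zL$ of faces of dimension $1$ containing $v$ with $\zL\cap\zL'=\emptyset$. Throughout, fix an irreducible system $Ax\leq b$ for $P$ and let $d=\dim(F)$. Since $P$ is full dimensional, the implicit equalities of $F$ are exactly the $\sigma(F)$ constraints of $Ax\leq b$ that are tight for $F$. Each of these is tight at $v$, because $v\in F$. The argument bounds $|\zL|$ and then bounds how many tight constraints each edge of $\zL$ can relax.

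\emph{Step 1: each edge relaxes few constraints.} Let $L\in\zL$. Since $L$ is a face of dimension $1$ of $P\subseteq\R^n$, at least $n-1$ linearly independent constraints of $Ax\leq b$ are tight on all of $L$. All of these are among the $n+\ell$ constraints tight at $v$. Hence at most $(n+\ell)-(n-1)=\ell+1$ constraints tight at $v$ fail to be tight on $L$. Saturation says that every one of the $\sigma(F)$ implicit equalities of $F$ is non-tight on some $L\in\zL$. Therefore
\[
\sigma(F)\leq(\ell+1)|\zL|.
\]

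\emph{Step 2: $|\zL|\leq k$.} This is the step that needs care. First, $\zL$ contains no edge lying inside $F$: such an edge lies in every immediate superface of $F$ and so belongs to $\zL'$. By Proposition~\ref{dim-count}, $F$ has at least $n-d$ immediate superfaces $F'$.

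For each such $F'$ we pick an edge $L_{F'}\ni v$ of $F'$ that is not contained in $F$. Such an edge exists because the cone at $v$ generated by the edges of $F'$ through $v$ contains $F'$. If all these edges lay in $F$, then $F'\subseteq\aff(F)$, contradicting $\dim(F')=d+1$. The edge $L_{F'}$ lies in $\zL'$.

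These edges are distinct for distinct superfaces. Suppose $L_{F_1}=L_{F_2}$ with $F_1\neq F_2$. Then $F_1\cap F_2$ is a face that contains $F$ together with an edge not in $F$, so its dimension exceeds $d$. But it is a face of $F_1$ of dimension at most $d+1$, so it equals $F_1$, and likewise $F_2$, a contradiction. Thus $\zL'$ contains at least $n-d$ edges through $v$ not lying in $F$.

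Also, $F$ is a pointed polyhedron of dimension $d$, so at least $d$ edges of $P$ through $v$ lie in $F$. Since $v$ lies on exactly $n+k$ edges, the number of edges through $v$ that are neither in $F$ nor in $\zL'$ is at most
\[
(n+k)-d-(n-d)=k.
\]
Every member of $\zL$ is such an edge, so $|\zL|\leq k$.

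\emph{Step 3: conclude.} Combining the two steps gives $\sigma(F)\leq(\ell+1)k\leq\alpha$. This contradicts $\sigma(F)>\alpha$, so $F$ is $v$-compatible. I expect the main obstacle to be Step 2, specifically the claim that distinct immediate superfaces yield distinct edges of $\zL'$ outside $F$. This is where the face-lattice structure and Proposition~\ref{dim-count} are genuinely used.
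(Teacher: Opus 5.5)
Your proof is correct and follows the paper's argument essentially step for step. You use the same three ingredients: the bound of at most $\ell+1$ relaxed constraints per edge (the paper's first claim), and the count of at least $d+(n-d)=n$ edges through $v$ that lie in $F$ or in some immediate superface, obtained via Proposition~\ref{dim-count} (the paper's second claim). These give $|\zL|\leq k$ and hence $\sigma(F)\leq k(\ell+1)\leq\alpha$. Your justification that distinct immediate superfaces yield distinct edges outside $F$ argues that $F_1\cap F_2$ would then be a face of dimension $d+1$, forcing $F_1=F_2$; this is more explicit than the paper's one-line assertion.
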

\begin{proof}
Let $Ax\leq b$ be an irreducible representation of $P$ and let $\bar{A}x\leq\bar{b}$ denote the tight constraints for $v$.
Let $\zL$ be the set of all faces of dimension $1$ of $P$ containing $v$.
We say that $\zL'\subseteq\zL$ is {\em non-tight} for a system of linear inequalities, 
if for each constraint there exists a point in some $L\in\zL'$ for which the constraint is not tight.
\begin{myclaim}\label{r-cl1}
Let $A'x\leq b'$ be a subsystem of $\bar{A}x\leq\bar{b}$ and let $L\in\zL$.
Then at most $\ell+1$ constraints of $A'x\leq b'$ are non-tight for $\{L\}$.
\end{myclaim}
\begin{proofofclaim}
Since $L$ is a face of dimension $1$ and $P\subseteq\R^n$, there are $\geq n-1$ constraints of $\bar{A}x\leq\bar{b}$ that are tight for $L$.
As $v$ is $(k,\ell)$-degenerate, $\bar{A}x\leq\bar{b}$ has at most $n+\ell$ constraints
Hence, at most $n+\ell-(n-1)=\ell+1$ of the constraints of $\bar{A}x\leq\bar{b}$ are non-tight for $\{L\}$.
As $A'x\leq b'$ is a subsystem of $\bar{A}x\leq\bar{b}$ the result follows.
\end{proofofclaim}
\noindent
Let $F_1,\ldots,F_p$ denote the immediate superfaces of $F$ and let $\zF=F\bigcup_{i\in[p]}F_i$.
\begin{myclaim}\label{r-cl2}
At least $n$ of the faces in $\zL$ intersect $\zF\setminus\{v\}$.
\end{myclaim}
\begin{proofofclaim}
Let $d=\dim(F)$. Then $p\geq n-d$, by Proposition \ref{dim-count}.
Since $F$ has dimension $d$, at least $d$ of the faces in $\zL$ intersect $F\setminus\{v\}$.
For each $i\in[p]$, at least one face $L_i\in\zL$ intersect $F_i\setminus F$.
Pick distinct $i,j\in[p]$. As $F_i,F_j$ are immediate superfaces and $F_i\neq F_j$, we have $L_i\neq L_j$.
Hence, we have $d+p\geq d+(n-d)\geq n$ distinct faces of $\zL$ that intersect $\zF\setminus\{v\}$.
\end{proofofclaim}
\noindent
Let $A'x\leq b'$ be the set of tight constraints of face $F$.
As $v\in F$, $A'x\leq b'$ is a subsystem of $\bar{A}x\leq\bar{b}$.
We may assume that there exists a $(F,v)$-saturating set $\zL'\subseteq\zL$ where none of the faces in $\zL'$ intersect $\zF\setminus\{v\}$, for otherwise $F$ is $v$-compatible.
Since $v$ is $(k,\ell)$-degenerate, $|\zL|\leq n+k$.
By Claim~\ref{r-cl2}, $n$ of the faces in $\zL$ intersect $\zF\setminus\{v\}$, hence $|\zL'|\leq n+k-n\leq k$.
For each $L\in\zL'$, by Claim~\ref{r-cl1}, at most $\ell+1$ of constraints $A'x\leq b'$ are non-tight for $\{L\}$.
Hence, at most $k(\ell+1)$ of constraints of $A'x\leq b'$ are non-tight for $\zL'$.
By hypothesis, $\sigma(F,P)>\alpha$, i.e. $A'x\leq b'$ has more than $\alpha$ constraints.
Since $\zL'$ is $(F,v)$-saturating $A'x\leq b'$ is non-tight for $\zL'$.
It follows that $k(\ell+1)>\alpha$, contradicting $k(\ell+1)\leq\alpha$.
\qed
\end{proof}
\begin{proposition}
Theorem~\ref{main-tech} implies Theorem~\ref{main-th}.
\end{proposition}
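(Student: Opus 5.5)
We verify the hypothesis of Theorem~\ref{main-tech} directly, using Proposition~\ref{reduction} with $\alpha=6$. Assume the hypotheses of Theorem~\ref{main-th}: $\zC$ is binary, $w$ are minimal weights, $\bar F$ is the $w$-optimal face of $P:=P(\zC)$, and $v\in\bar F$ is a $(k,\ell)$-degenerate integral corner with $k(\ell+1)\leq 6$. We must show that every face $F\supseteq\bar F$ with $\sigma(F,P)\geq 7$ is $v$-compatible. The conclusion of Theorem~\ref{main-tech} is then exactly the conclusion of Theorem~\ref{main-th}.

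First we check that Proposition~\ref{reduction} applies to $P$. That proposition requires $P\subseteq\R^n$ to be full dimensional, and the notion of $(k,\ell)$-degeneracy is only defined for such polyhedra.
\begin{itemize}
\item[(1)] $P(\zC)$ is full dimensional. Because $\zC$ is non-trivial, every member is nonempty, so every point $x$ with all entries larger than $1$ satisfies $M(\zC)x\geq\1$ and $x\geq\0$ strictly. Hence $P(\zC)$ contains an open ball and is full dimensional.
\item[(2)] $P$ is pointed, since it lies in the nonnegative orthant. So vertices and faces of dimension $1$ are well defined.
\end{itemize}
Since $P$ is described by an irreducible subsystem of $M(\zC)x\geq\1$, $x\geq\0$, the quantities $\sigma(\cdot,P)$ and the degeneracy parameters $(k,\ell)$ are defined with respect to that irreducible system, as in the definitions. (Rewriting the system in the form $Ax\leq b$ only flips signs and changes nothing.)

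Now fix a face $F\supseteq\bar F$ with $\sigma(F,P)\geq 7$, i.e.\ $\sigma(F,P)>6$. Since $v\in\bar F\subseteq F$, we have $F\ni v$. Applying Proposition~\ref{reduction} with $\alpha=6$ (note $6\in\Z_+$ and $k(\ell+1)\leq 6$) shows that $F$ is $v$-compatible. So every such face is $v$-compatible, and Theorem~\ref{main-tech} yields a dyadic optimal solution of \eqref{dual}.

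The only points needing care are the full-dimensionality check and that $F$ is a proper face, as required in the definitions of $\sigma$ and of $v$-compatibility. Properness holds because $\sigma(F)\geq 7>0$: at least one constraint is tight on $F$, so $F\neq P$. I expect no real obstacle; the substantive work is already contained in Proposition~\ref{reduction} and Theorem~\ref{main-tech}.
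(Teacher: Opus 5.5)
Your proposal is correct and follows the paper's proof exactly: you set $\alpha=6$, apply Proposition~\ref{reduction} to each face $F\supseteq\bar F$ with $\sigma(F,P(\zC))\geq 7$ (such a face contains $v$), and then invoke Theorem~\ref{main-tech}. Your extra checks that $P(\zC)$ is full dimensional and that such an $F$ is a proper face are valid details that the paper leaves implicit.
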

\begin{proof}
Let $\alpha=6$. 
By hypothesis of Theorem~\ref{main-th}, $v$ is $(k,\ell)$-degenerate for $k(\ell+1)\leq \alpha$.
It follows from Proposition~\ref{reduction} that every face $F\supseteq\bar{F}$ for which $\sigma(F)\geq 7$ is $v$-compatible.
But then Theorem~\ref{main-tech} implies that \eqref{dual} has an optimal solution that is dyadic as required.
\qed
\end{proof}
\noindent
In the remainder of the paper we shall prove Theorem~\ref{main-tech}.
\section{A lemma on binary clutters.}\label{sec-binary}
Here is the key result of the section,
\begin{proposition}\label{3-inter}
Let $\zC$ be a binary clutter and let $S\in\zC$.
Suppose that there exists $B\in b(\zC)$ where $|S\cap B|\geq 3$.
Then there exists $B'\in b(\zC)$ for which $|S\cap B'|=3$ and $S\cap B'\subseteq S\cap B$.
\end{proposition}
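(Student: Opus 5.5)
We proceed by induction on $|S\cap B|$, using the standard characterization of binary clutters: $\zC$ is binary if and only if for any three members $S_1,S_2,S_3$ of $\zC$ (respectively of $b(\zC)$, since $b(\zC)$ is binary whenever $\zC$ is), the symmetric difference $S_1\tr S_2\tr S_3$ contains a member of $\zC$ (respectively of $b(\zC)$). Since $\zC$ is binary, $|S\cap B|$ is odd, so we may assume $|S\cap B|\geq 5$ and aim to find $B''\in b(\zC)$ with $S\cap B''\subsetneq S\cap B$. Odd parity then forces $|S\cap B''|\leq |S\cap B|-2$, and induction applied to $B''$ finishes the argument.

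To construct $B''$, pick three distinct elements $a,b,c\in S\cap B$ and set $T:=S\setminus\{a,b,c\}$, which is nonempty since $|S\cap B|\geq 5$. The key claim is that $T\cup\{a\}$ is not a cover of $b(\zC)$ in the sense relevant here. Equivalently, we want a blocker member $B_a$ avoiding $S\setminus\{a\}$, so that $S\cap B_a=\{a\}$. For each $e\in S$, the set $S\setminus\{e\}$ contains no member of $\zC$ because $\zC$ is a clutter. Hence $S\setminus\{e\}$ is not a cover of $b(\zC)$, so some $B_e\in b(\zC)$ is disjoint from $S\setminus\{e\}$. It meets $S$, so $S\cap B_e=\{e\}$. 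This gives $B_a,B_b,B_c$ with $S\cap B_a=\{a\}$, $S\cap B_b=\{b\}$ and $S\cap B_c=\{c\}$.

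Now consider $X:=B\tr B_a\tr B_b$. Tracking elements of $S$, we get $X\cap S=(S\cap B)\setminus\{a,b\}$: the elements $a$ and $b$ each appear twice and cancel, while every other element of $S\cap B$ appears only in $B$. By the binary characterization applied to $b(\zC)$, $X$ contains some $B''\in b(\zC)$. Then $S\cap B''\subseteq (S\cap B)\setminus\{a,b\}\subsetneq S\cap B$, and $S\cap B''$ is nonempty and of odd size, which is exactly what the induction needs. Notice that only two of the three elements are needed, so $B_c$ is superfluous.

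The main obstacle is justifying the binary characterization, namely that for binary $\zC$ the blocker $b(\zC)$ is closed under "symmetric difference of three members contains a member". This is a classical result of Lehman and Seymour, and I would cite it from \cite{cornuejols01} (binary clutters section). If a self-contained argument is wanted, the approach is to show directly that $X$ is a cover of $\zC$. For any $S'\in\zC$, $|S'\cap X|\equiv |S'\cap B|+|S'\cap B_a|+|S'\cap B_b|\equiv 1 \pmod 2$, since each term is odd by binarity. So $|S'\cap X|$ is odd, hence $X$ meets $S'$. Therefore $X$ is a cover and contains a minimal cover $B''\in b(\zC)$. This parity argument is elementary and avoids the citation entirely.
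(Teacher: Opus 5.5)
Your proposal has a genuine gap at the last step. The induction hypothesis can be applied to $B''$ only if $|S\cap B''|\geq 3$. Your construction only guarantees that $S\cap B''$ is a nonempty proper subset of $S\cap B$ of odd size. Nothing excludes $|S\cap B''|=1$, in which case no progress has been made, and this does happen.

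Let $\zC$ be the clutter of $st$-paths of the graph $G$ with vertices $s=p_0,p_1,\dots,p_6=t,u,x,z$ and edges $e_i=p_{i-1}p_i$ ($i\in[6]$), $up_0,up_2,up_4,xp_1,xp_3,xp_5,xp_6,uz,zx$. This clutter is binary, and $b(\zC)$ consists of the minimal $st$-cuts. Take $S=\{e_1,\dots,e_6\}$ and $B=\delta(\{p_0,p_2,p_4,u,z\})=\{e_1,\dots,e_5,zx\}$, so $|S\cap B|=5$. With $a=e_1$ and $b=e_2$ you may legitimately choose $B_a=\delta(\{p_0,u,z\})=\{e_1,up_2,up_4,zx\}$ and $B_b=\delta(\{p_0,p_1\})=\{e_2,up_0,xp_1\}$. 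Then $X=\{e_3,e_4,e_5,up_0,up_2,up_4,xp_1\}$. The components of $G\setminus X$ are $\{p_0,p_1,p_2\}$, $\{p_4\}$ and $\{p_3,p_5,p_6,u,x,z\}$. Because $p_4$ has no neighbour in $\{p_0,p_1,p_2\}$, the \emph{only} member of $b(\zC)$ contained in $X$ is $\delta(\{p_0,p_1,p_2\})=\{up_0,up_2,xp_1,e_3\}$, which meets $S$ in the single edge $e_3$. So the issue is not merely which $B''\subseteq X$ you pick. You would need to prove that some choice of $a,b,B_a,B_b,B''$ yields $|S\cap B''|\geq 3$, and your argument gives no handle on this.

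That lower bound is precisely the content of the proposition, and parity reasoning cannot supply it. The parts you prove are correct: the sets $B_e$ with $S\cap B_e=\{e\}$ exist, and $X$ is a cover. But they only yield a blocker member meeting $S$ in some proper odd subset of $S\cap B$.

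The paper obtains the lower bound by passing to a binary matroid $M$ with $\zC=\port_\Omega(M^*)$ and $b(\zC)=\port_\Omega(M)$. It proves that if a circuit $C\ni\Omega$ and a cocircuit $D\ni\Omega$ satisfy $|C\cap D|\geq 6$, then some circuit $C'\ni\Omega$ has $4\leq |C'\cap D|<|C\cap D|$ and $C'\cap D\subseteq C\cap D$. The argument runs through a minimal counterexample:
\begin{enumerate}
\item Contracting or deleting elements outside $C\cap D$ reduces to the case $C=D$, with every element spanned by $C$.
\item Each fundamental circuit through $\Omega$ must then be a triangle. So $M/\Omega$ is a polygon with some edges doubled, and $M$ is an even-cycle matroid with signature $D$.
\item Minimality of the cocircuit $D$ forces at least two doubled edges.
\item Swapping two of these edges for their parallel partners gives a circuit $C'$ with $|C'\cap D|=|C\cap D|-2\geq 4$.
\end{enumerate}
This structural step, which keeps the intersection at least $4$ (that is, at least $3$ after removing $\Omega$), is what your proof is missing.
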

\noindent
Before we proceed to the proof observe that the binary assumption cannot be omitted, even if we replace $3$ with any constant bound.
Namely, we have,
\begin{remark}
For every $n\in\Z_+$ there exists a clutter $\zC$ with $|E(\zC)|=2n$ and $S\in\zC$, such that
\begin{itemize}
\item there exists $B\in b(\zC)$ with $|S \cap B|=n$, and
\item for every $B\in b(\zC)$, $|S\cap B|\in\{1,n\}$.
\end{itemize}
\end{remark}
\begin{myproof}
Let $\zB$ be the clutter with groundset $\{1,\ldots,n\}\cup\{1',\ldots,n'\}$ and members $\{1,\ldots,n\}$ and for all $i\in[n]$ members $\{i,i'\}$.
Let $\zC=b(\zB)$. Then $S=\{1,\ldots,n\}\in\zC$ and the result follows.
\end{myproof}
\noindent
If we are allowed to pick the set $S\in\zC$ then the condition that $\zC$ be binary is not needed.
\begin{proposition}
Let $\zC$ be a clutter. Suppose that some $S\in\zC$ and some $B\in b(\zC)$ satisfy $|S\cap B|>1$.
Then there exists $S'\in\zC$ and $B'\in b(\zC)$ with $|S'\cap B'|\in\{2,3\}$.
\end{proposition}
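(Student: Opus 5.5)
Our plan is to choose a pair $(S',B')$ with $S'\in\zC$, $B'\in b(\zC)$ and $|S'\cap B'|\geq 2$ that minimizes $|S'\cap B'|$. The hypothesis guarantees that such pairs exist. We then show that the minimum value $t:=|S'\cap B'|$ satisfies $t\leq 3$. We argue by contradiction: suppose $t\geq 4$ and build a pair whose intersection has size between $2$ and $t-1$.

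The construction uses only the definition of blockers, through a standard exchange step. Pick an element $e\in S'\cap B'$ and consider the set $B'\setminus\{e\}$. This set is not a cover, since $B'$ is a minimal cover. Hence there is a member $S''\in\zC$ with $S''\cap B'=\{e\}$; moreover every such $S''$ meets $B'$ exactly in $e$. Now consider $T:=(S'\cap B')\setminus\{e\}\cup (S''\setminus B')$, or more simply the following. The set $(B'\setminus\{e\})\cup (S''\setminus\{e\})$ is not obviously minimal, so instead we pass to sets of the form $X=(B'\setminus S')\cup\{e,f\}$ for $f\in (S'\cap B')\setminus\{e\}$. We check whether $X$ is a cover. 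Any $S_0\in\zC$ missing $X$ meets $B'$ only inside $(S'\cap B')\setminus\{e,f\}$. If $X$ is a cover, it contains a minimal cover $B''\subseteq X$. Then $S'\cap B''\subseteq\{e,f\}$, and this intersection is nonempty. If $|S'\cap B''|=2$ we are done. If it equals $1$, say $\{e\}$, we repeat the argument with a different $f$.

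When these covers all fail, we obtain members $S_0\in\zC$ with $\emptyset\neq S_0\cap B'\subsetneq S'\cap B'$. Since the pair $(S',B')$ was chosen minimal, every such $S_0$ must satisfy $|S_0\cap B'|=1$. We then exploit this symmetrically. Apply the same reasoning with the roles of $\zC$ and $b(\zC)$ exchanged, using $b(b(\zC))=\zC$. This yields covers $B_0$ with $|S'\cap B_0|=1$ and $S'\cap B_0\subseteq S'\cap B'$. We then try to combine one such $S_0$ and one such $B_0$ into a new member or blocker that meets the other in at least $2$ but fewer than $t$ elements, for instance by taking a minimal cover inside $B_0\cup (B'\setminus S')$.

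We expect this combination step to be the main obstacle. The Remark shows that, for a fixed $S$, intersections of size $1$ and $n$ can coexist. So the argument must genuinely move away from $S'$ as well as away from $B'$. The right choice is probably to minimize $|S'\cap B'|$ over all pairs and then take a minimal cover inside $(B'\setminus (S'\cap B'))\cup\{e,f\}$. If none of these is a cover for any choice of $f$, we must show that the resulting members $S_0$ force a smaller intersection with some other blocker member. We will try to make this work with $t$ replaced by $3$ as the target value, which is where the bound $\{2,3\}$ should come from.
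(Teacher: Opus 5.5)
There is a genuine gap. The proposal does not rule out $t\geq 4$: that step is left as an intention (``we will try to make this work''), and the one concrete construction you give cannot yield information. The set $X=(B'\setminus S')\cup\{e,f\}$ with $e,f\in S'\cap B'$ is a subset of $B'$. It misses the $t-2\geq 2$ elements of $(S'\cap B')\setminus\{e,f\}$, so by minimality of $B'$ it is \emph{never} a cover, and the branch ``if $X$ is a cover'' is empty.

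What you extract instead is trivial. Members $S_0$ with $|S_0\cap B'|=1$ exist for every $B'\in b(\zC)$ and every element of $B'$, because $B'$ is a minimal cover. Dually, $B_0\in b(\zC)$ with $|S'\cap B_0|=1$ exist for every $S'\in\zC$ and every element of $S'$, because $b(b(\zC))=\zC$. These objects therefore cannot contradict $t\geq 4$. Your proposed combination also makes no progress: a minimal cover inside $B_0\cup(B'\setminus S')$ meets $S'$ only inside $S'\cap B_0$, hence in exactly one element.

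The missing idea is that the statement needs structural input, and it splits according to whether $\zC$ is binary, because the achievable value differs.

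\textbf{Binary case.} Every $|S\cap B|$ is odd, so your ``size $2$'' outcome never occurs. You must produce an intersection of size exactly $3$ from one of size $\geq 5$. This is precisely the nontrivial content of Proposition~\ref{3-inter}. The paper proves it as follows:
\begin{enumerate}
\item Write $\zC$ as a port of a binary matroid.
\item Take a minimum counterexample, which forces $C=D$ with every other element spanned by $C$.
\item Analyse the resulting even-cycle matroid.
\end{enumerate}

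\textbf{Non-binary case.} The paper uses Seymour's theorem that $\zC$ has a minor $\zC/I\setminus J$ isomorphic to $\zP_4$ or some $\Delta_s$. Each of these has a member and a blocker member meeting in exactly two elements. Such a pair lifts to $S'\cup I'\in\zC$ and $B'\cup J'\in b(\zC)$ with the same intersection.

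Your exchange steps, which only use single elements of $S'\cap B'$ and never use binarity, supply no substitute for either result. Minimizing $t$ over all pairs gives nothing to push against unless one of these structural facts is brought in.
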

\begin{myproof}
If $\zC$ is binary then the result holds by Proposition \ref{3-inter}.
Otherwise, it is proved in \cite{Seymour76} that $\zC$ contains as a minor, say $\zD=\zC/I\setminus J$, one of: 
$\zP_4=\{\{1,2\},\{2,3\},\{3,4\}\}$ or $\Delta_s=\{\{1,\ldots,s\} \cup \{0,i\}: i\in[s]\}$ where $s\geq 2$.
Note that $\{2,3\}\in \zP_4\cap b(\zP_4)$ and $\{0,1\}\in \Delta_s\cap b(\Delta_s)$.
Hence, there exists $S'\in\zD$ and $B'\in b(\zD)$ with $|S'\cap B'|=2$.
Then for some $I'\subseteq I$ and $J'\subseteq J$ we have $S=S'\cup I'\in\zC$ and $B=B'\cup J'\in b(\zC)$.
In particular, $|S\cap B|=|S'\cap B'|=2$.
\end{myproof}
\subsection{A restatement in terms of matroids.}
To prove Proposition \ref{3-inter} we shall first restate the result in the language of binary matroids.
Given a matroid $M$ with a fixed element $\Omega$ we define,
\[
\port_\Omega(M)=\{C\setminus\Omega: \mbox{$C$ is a circuit of $M$ containing $\Omega$}\}.
\]
Binary clutters are exactly the clutters that can be expressed as ports of binary matroids. Namely,
\begin{proposition}[\cite{Seymour76}]\label{port-rep}
Let $\zC$ be a clutter. Then the following are equivalent,
\begin{itemize}
\item $\zC$ is a binary clutter,
\item for some binary matroid $M$ and $\Omega\in E(M)$ we have $\zC=\port_\Omega(M)$.
\end{itemize}
\end{proposition}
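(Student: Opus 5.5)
The statement is an equivalence, and I would prove the two directions separately. For the reverse direction the key input is the matroid identity $b(\port_\Omega(M))=\port_\Omega(M^*)$, which holds for an arbitrary matroid $M$. For the forward direction I would build the binary matroid explicitly from its cycle space over $GF(2)$.

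\emph{Second condition implies the first.} Let $M$ be binary, $\Omega\in E(M)$, and $\zC=\port_\Omega(M)$. The argument has two steps.

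First, I show $b(\zC)=\port_\Omega(M^*)$.
\begin{itemize}
\item Let $D\ni\Omega$ be a cocircuit of $M$. In any matroid a circuit and a cocircuit never meet in exactly one element. Hence $D\setminus\Omega$ meets $C\setminus\Omega$ for every circuit $C\ni\Omega$, so $D\setminus\Omega$ is a cover of $\zC$.
\item Conversely, let $B$ be a cover of $\zC$. Then no circuit of $M\setminus B$ contains $\Omega$, so $\Omega$ is a coloop of $M\setminus B$. Thus $\{\Omega\}$ is a cocircuit of $M\setminus B$. Cocircuits of a deletion are minimal nonempty traces of cocircuits of $M$, so some cocircuit $D$ of $M$ satisfies $\Omega\in D\subseteq B\cup\Omega$.
\item Taking minimal members on both sides gives the identity.
\end{itemize}

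Second, in a binary matroid every circuit meets every cocircuit in an even number of elements. For $S=C\setminus\Omega\in\zC$ and $B=D\setminus\Omega\in b(\zC)$, the sets $C$ and $D$ share $\Omega$. Hence $|S\cap B|=|C\cap D|-1$ is odd, and $\zC$ is binary.

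\emph{First condition implies the second.} Let $\zC$ be binary and let $\Omega$ be a new element. Over $GF(2)$, let $V\subseteq GF(2)^{E(\zC)\cup\Omega}$ be the span of the vectors $\chi(S)+e_\Omega$ for $S\in\zC$. Let $M$ be the binary matroid whose cycle space is $V$, so that its circuits are the minimal nonempty supports of vectors of $V$. I would first record a parity observation for any $x\in V$, written as a sum of $t$ generators:
\begin{itemize}
\item $x_\Omega=t \bmod 2$;
\item for every $B\in b(\zC)$, $|\mathrm{supp}(x)\cap B|\equiv t \pmod 2$, because each $S$ meets $B$ oddly.
\end{itemize}
Consequently, if $x_\Omega=1$ then $X=\mathrm{supp}(x)\setminus\Omega$ meets every $B\in b(\zC)$. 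So $X$ is a cover of $b(\zC)$ and contains a member of $b(b(\zC))=\zC$.

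Now I check that the circuits of $M$ through $\Omega$ are exactly the sets $S\cup\Omega$ with $S\in\zC$.
\begin{itemize}
\item \emph{$S\cup\Omega$ is a circuit.} Suppose some $x\in V$ is nonzero with support properly inside $S\cup\Omega$. If $x_\Omega=1$, its support minus $\Omega$ is a proper subset of $S$ containing a member of $\zC$, contradicting that $\zC$ is a clutter. If $x_\Omega=0$, then $x+\chi(S)+e_\Omega$ has support $(S\setminus\mathrm{supp}(x))\cup\Omega$, which yields the same contradiction.
\item \emph{Every circuit through $\Omega$ has this form.} Let $X\cup\Omega$ be a circuit. By the observation, $X$ contains some $S\in\zC$. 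Then $S\cup\Omega$ is a nonempty cycle inside the circuit, so the two are equal.
\end{itemize}
Hence $\port_\Omega(M)=\zC$.

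The main obstacle is the blocker identity for ports in the first direction, specifically the coloop step. I would handle it via the standard fact that an element lies in no circuit if and only if it is a coloop, together with the description of cocircuits of $M\setminus B$.
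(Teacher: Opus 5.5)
Your proof is correct, and it is complete. The paper gives no argument for this proposition; it imports it from Seymour's 1976 paper. So your write-up is a self-contained substitute rather than a variant of a proof in the text.

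For the direction from ports to binary clutters, you derive the identity $b(\port_\Omega(M))=\port_\Omega(M^*)$ from scratch. The covering half uses the fact that a circuit and a cocircuit never meet in exactly one element. The converse half uses the coloop step, via cocircuits of $M\setminus B$ being minimal nonempty traces of cocircuits of $M$. This argument works for arbitrary matroids, whereas the paper states the identity only for binary $M$ (Proposition~\ref{port-blocker}, citing Novick and Seb\H{o}). Binarity then enters only through even circuit--cocircuit intersections.

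For the converse direction, you take $M$ to be the binary matroid whose cycle space is spanned by the vectors $\chi(S)+e_\Omega$. You combine this with the parity count against members of $b(\zC)$ and the idempotence $b(b(\zC))=\zC$, which the paper also quotes. Together these identify the circuits through $\Omega$ as exactly the sets $S\cup\Omega$, and your case split on $x_\Omega$ in the minimality check is right. The degenerate cases where $\Omega$ is a loop or coloop give trivial clutters, which the paper excludes anyway, and your argument is unaffected by them.

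Compared with the paper's bare citation, your route shows that both matroid facts the paper imports for the proof of Proposition~\ref{3-inter} (Propositions~\ref{port-rep} and~\ref{port-blocker}) follow from elementary matroid facts plus blocker idempotence. The citation buys the paper only brevity.
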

\noindent
In addition we have the following attractive relation between matroid duality and the blocking relation.
\begin{proposition}[\cite{novick95} (2.A.1)]\label{port-blocker}
Let $M$ be a binary matroid and let $\Omega\in E(M)$. Then,
\[
b\bigl(\port_\Omega(M)\bigr)=\port_\Omega(M^*).
\]
\end{proposition}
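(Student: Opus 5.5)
The plan is to prove the two inclusions between $b(\port_\Omega(M))$ and $\port_\Omega(M^*)$ directly from circuit–cocircuit orthogonality and the description of cocircuits of a deletion. Recall that the circuits of $M^*$ are exactly the cocircuits of $M$. So $\port_\Omega(M^*)=\{D\setminus\Omega: D \mbox{ a cocircuit of } M \mbox{ with } \Omega\in D\}$. This family is a clutter, since distinct cocircuits are incomparable. I assume, as throughout the paper, that the clutters involved are non-trivial. In particular $\Omega$ is neither a loop nor a coloop, which would make one of the two ports equal to $\emptyset$ or $\{\emptyset\}$.

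\emph{Step 1: every member of $\port_\Omega(M^*)$ is a cover of $\port_\Omega(M)$.} Let $C$ be a circuit and $D$ a cocircuit of $M$, both containing $\Omega$. Since $M$ is binary, $|C\cap D|$ is even. (In fact $|C\cap D|\neq 1$ already suffices, and this holds in every matroid.) Because $\Omega\in C\cap D$, we get $|C\cap D|\geq 2$, so $(C\setminus\Omega)\cap(D\setminus\Omega)\neq\emptyset$. In the binary case this intersection even has odd size, which is consistent with the binary clutter definition.

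\emph{Step 2: every cover $B\subseteq E(M)\setminus\Omega$ of $\port_\Omega(M)$ contains a member of $\port_\Omega(M^*)$.} Consider the deletion $M\setminus B$. If $M\setminus B$ had a circuit containing $\Omega$, that circuit would be a circuit of $M$ avoiding $B$. Its port element would then be a member of $\port_\Omega(M)$ disjoint from $B$, contradicting that $B$ is a cover. Hence $\Omega$ is a coloop of $M\setminus B$, that is, $\{\Omega\}$ is a cocircuit of $M\setminus B$. The cocircuits of $M\setminus B$ are the minimal nonempty sets of the form $D\setminus B$ with $D$ a cocircuit of $M$. So some cocircuit $D$ of $M$ satisfies $D\setminus B=\{\Omega\}$, which gives $\Omega\in D$ and $D\setminus\Omega\subseteq B$, as required.

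\emph{Conclusion.} By Steps 1 and 2, the inclusion-wise minimal covers of $\port_\Omega(M)$ are exactly the minimal members of $\port_\Omega(M^*)$. Since $\port_\Omega(M^*)$ is a clutter, its minimal members are all of its members, which yields $b(\port_\Omega(M))=\port_\Omega(M^*)$. The only step needing care is Step 2, namely the identification of cocircuits of a deletion as the minimal nonempty restrictions $D\setminus B$ of cocircuits of $M$. This is standard, since $(M\setminus B)^*=M^*/B$ and the circuits of a contraction are the minimal nonempty $D\setminus B$.
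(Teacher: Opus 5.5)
Your proof is correct. The paper does not prove this proposition at all; it imports it by citation from Novick and Seb\H{o}. So your argument is a self-contained replacement rather than a reconstruction of the paper's reasoning.

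Both of your steps use only facts valid in every matroid. Step~1 needs only $|C\cap D|\neq 1$ for a circuit $C$ and a cocircuit $D$. Step~2 needs only two facts: an element lying in no circuit is a coloop, and the cocircuits of $M\setminus B$ are the minimal nonempty sets $D\setminus B$ (equivalently, the circuits of $M^*/B$). Your final deduction is also sound: a clutter of covers such that every cover contains one of its members must equal the blocker.

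Consequently you have shown $b(\port_\Omega(M))=\port_\Omega(M^*)$ for arbitrary matroids. Binarity only upgrades Step~1 to the odd-intersection property, which is what makes the resulting clutter binary. That is why the paper states the result for binary matroids, alongside Proposition~\ref{port-rep}.

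Your non-triviality assumption can also be dropped. If $\Omega$ is a loop of $M$, there are no covers and both sides are $\emptyset$. If $\Omega$ is a coloop, both sides equal $\{\emptyset\}$.

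The citation buys brevity. Your route buys self-containment and shows that the binary hypothesis is not needed for this particular identity.
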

\noindent
Here $M^*$ denotes the dual of $M$.
Oxley studied when a matroid contains a {\em quad}, namely a set of 4 elements that are exactly the intersection of a circuit and a cocircuit \cite{oxley84}.
We prove the following related rooted result for binary matroids.
\begin{proposition}\label{4-inter}
Let $M$ be a binary matroid and let $\Omega\in E(M)$. Consider $C$ and $D$ where
\begin{enumerate}[\;\;a.]
\item $D$ is a cocircuit of $M$ containing $\Omega$, 
\item $C$ is a circuit of $M$ containing $\Omega$, and
\item $|C\cap D|\geq 4$.
\end{enumerate}
Then there exists a circuit $C'$ of $M$ with $\Omega\in C'$,
$|C'\cap D|=4$ and $C'\cap D\subseteq C\cap D$.
\end{proposition}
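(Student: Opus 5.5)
The plan is a minimal counterexample argument that uses the binary structure through symmetric differences. Recall that in a binary matroid every circuit meets every cocircuit in an even number of elements, and a symmetric difference of circuits is a \emph{cycle}, i.e.\ a disjoint union of circuits. Fix $D$ and $\Omega$. Among all circuits $C'$ with $\Omega\in C'$, $|C'\cap D|\geq 4$ and $C'\cap D\subseteq C\cap D$, choose one minimizing $|C'\cap D|$; the circuit $C$ itself is a candidate, so such a choice exists. If $|C'\cap D|=4$ we are done. Otherwise $|C'\cap D|\geq 6$ by parity, and I aim to produce a circuit $C''$ through $\Omega$ with $4\leq|C''\cap D|<|C'\cap D|$ and $C''\cap D\subseteq C'\cap D$, contradicting minimality.

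\textbf{Step 1 (short circuits).} Since $D$ is a cocircuit, $H:=E(M)\setminus D$ is a hyperplane. Hence for each $a\in D\setminus\Omega$ the set $H\cup\{a\}$ is spanning, so $\Omega\in\mathrm{cl}(H\cup\{a\})$. This gives a circuit $C_a\subseteq H\cup\{\Omega,a\}$ containing $\Omega$. By parity $a\in C_a$, and so $C_a\cap D=\{\Omega,a\}$.

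\textbf{Step 2 (reduction).} For distinct $a,b\in (C'\cap D)\setminus\Omega$, the set $Z:=C'\,\triangle\, C_a\,\triangle\, C_b$ is a cycle. Since $\Omega$ lies in all three sets it survives, so
\[
Z\cap D=(C'\cap D)\setminus\{a,b\},
\]
which has size $\geq 4$ and contains $\Omega$. Write $Z$ as a disjoint union of circuits and let $C''$ be the one containing $\Omega$. Then $C''\cap D$ is an even subset of $(C'\cap D)\setminus\{a,b\}$ containing $\Omega$. If $|C''\cap D|\geq 4$, minimality is contradicted.

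\textbf{Step 3 (the main obstacle).} The hard case is $C''\cap D=\{\Omega,x\}$ for \emph{every} admissible choice of $a,b$. To handle it, I would first choose $C'$ to be lexicographically minimal: minimize $|C'\cap D|$, then $|C'|$. This lets me exploit that $C'\setminus D$ is independent and $C'\cap D$ is a circuit of $M/(C'\setminus D)$. In that contraction, each $C_a$ becomes a union of circuits through $a$ and $\Omega$, and the bad case says that $\Omega$ is parallel-like to many elements of $C'\cap D$. Suppose the bad case gives $C''\cap D=\{\Omega,x\}$, and let $Z'=Z\setminus C''$, whose circuits cover the remaining elements of $(C'\cap D)\setminus\{a,b,x\}$. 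I would then form $C'\,\triangle\, C''$, or $C'\,\triangle\, C''\,\triangle\, C_y$ for a suitable $y$. This yields a cycle through $\Omega$ meeting $D$ in $(C'\cap D)\setminus\{x\}$ modified by one element, and I would re-run the decomposition. Varying the pair $\{a,b\}$ over the $\geq 5$ elements of $(C'\cap D)\setminus\Omega$ should force some choice where the $\Omega$-circuit retains at least four elements of $D$. Alternatively it should produce a contradiction with $D$ being a cocircuit, via orthogonality of the resulting small circuits $\{\Omega,x\}\cup(\text{elements of }H)$ against $D$ and against the minimal $C'$.

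\textbf{Step 4 (transfer to clutters).} Once this rooted quad-type statement is proven, Proposition~\ref{3-inter} will follow. Apply it with Proposition~\ref{port-rep} and Proposition~\ref{port-blocker}: write $S=C\setminus\Omega$ and $B=D\setminus\Omega$, where $D$ is a circuit of $M^*$ through $\Omega$, i.e.\ a cocircuit of $M$. Then $|S\cap B|=|C\cap D|-1$, and the intersection sizes $3$ and $4$ correspond. One subtlety needs care: Proposition~\ref{3-inter} asks for a new blocker element $B'$ rather than a new member $S'$. So I would apply Proposition~\ref{4-inter} to $M^*$, with the roles of circuit and cocircuit swapped, which is legitimate since $M^*$ is binary.
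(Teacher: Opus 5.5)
Your Steps 1 and 2 are correct, but they only handle the easy case. The whole content of the proposition sits in Step 3, and Step 3 is not an argument.

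The circuits $C_a$ of Step 1 are arbitrary circuits inside $(E(M)\setminus D)\cup\{\Omega,a\}$. They are free to run through $C'\setminus D$ and through one another, so $Z=C'\triangle C_a\triangle C_b$ genuinely can split with the circuit through $\Omega$ keeping only one further element of $D$. For example, take the even-cycle matroid of a $5$-wheel plus an odd loop $\Omega$, with $D=\Sigma$ consisting of $\Omega$ and the rim edges $a_0,\dots,a_4$, and $C'=D$. Each $C_{a_i}$ is forced to be $\Omega$ together with the triangle on $a_i$ and two spokes. For the pair $a_0,a_2$, the cycle $Z$ is the disjoint union of $\Omega$ plus the triangle on $a_1$, and an even quadrilateral through $a_3,a_4$.

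Your proposed remedies make no progress. $C'\triangle C''$ no longer contains $\Omega$. $C'\triangle C''\triangle C_y$ meets $D$ in $(C'\cap D)\setminus\{x,y\}$, so it is just another cycle of exactly the Step~2 type, and ``re-running the decomposition'' can repeat indefinitely. You offer no invariant or potential, and ``varying the pair should force\dots'' is a hope rather than a proof. The lexicographic choice of $C'$ adds nothing either: $C'\setminus D$ is independent and $C'\cap D$ is a circuit of $M/(C'\setminus D)$ for \emph{every} circuit $C'$ through $\Omega$.

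The missing idea is a reduction that makes the short circuits rigid. The paper obtains it from a counterexample minimizing $|E(M)|$; in your setting it reads as follows.
Pass to $M'=M/(C'\setminus D)\setminus(D\setminus C')$, where $X=C'\cap D$ is both a circuit and a cocircuit. Then keep contracting elements outside $X$ that are not spanned by $X$. These operations preserve ``$X$ is a circuit and a cocircuit''. Every contracted element lies outside $D$, so circuits of the minor through $\Omega$ lift to circuits of $M$ through $\Omega$ with the same trace on $D$, and your minimality of $C'$ transfers.

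Now each non-loop $f\notin X$ is spanned by $X$. Hence $X\cup f$ contains exactly two circuits through $f$, and their symmetric difference is $X$. The one through $\Omega$ meets $X$ in a proper even subset, so by minimality it is a triangle $\{\Omega,e_f,f\}$.
The set $E(M')\setminus X$ is a hyperplane of rank $|X|-2\ge 4$, and $e_f=e_g$ would make $f,g$ parallel. So some $f,g$ have $e_f\neq e_g$.

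Use these triangles as $C_{e_f},C_{e_g}$ in Step 2. The cycle $Z=(X\setminus\{e_f,e_g\})\cup\{f,g\}$ is then a circuit. Indeed, take a circuit $Z_1\subseteq Z$ through $\Omega$ and symmetric-difference it with the triangles for those of $f,g$ it contains. This gives a nonempty cycle inside $X$, hence $X$ itself, which forces $Z_1=Z$. Since $Z\cap D$ is a proper subset of $X$ of size at least $4$, this contradicts minimality.

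So Step 2 does close the argument, but only after this reduction, which your proposal lacks.
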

\noindent
We postpone the proof until Section \ref{sec-proof4}.
Let us show that this implies our result on binary clutters.
\begin{myproof}[Proposition \ref{3-inter}]
By Proposition~\ref{port-rep}, $\zC=\port_{\Omega}(M^*)$, 
for some binary matroid $M$. 
Then $S=D\setminus\{\Omega\}$ where $D$ is a cocircuit of $M$ with $\Omega \in D$.
By Proposition~\ref{port-blocker}, $B=C\setminus\{\Omega\}$ 
where $C$ is a circuit of $M$ such that $\Omega \in C$.
By Proposition~\ref{4-inter}, there exists a circuit $C'$ of $M$
with $\Omega\in C'$, $|C'\cap D| = 4$ and $C'\cap D\subseteq C\cap D$.
Then by Proposition~\ref{port-blocker},
$B' := C' \setminus \{\Omega\}$ is a member of $b(\mathcal{C})$
where
\[
|S\cap B'| = |(D\cap C') \setminus \{\Omega\}| = 3.
\]
Moreover, observe that $S\cap B'\subseteq S\cap B$. 
\end{myproof}
\subsection{The proof of Proposition \ref{4-inter}.}\label{sec-proof4}
We shall require a few definitions first. 
A {\em signed graph} is a pair $(G,\Sigma)$ where $\Sigma\subseteq E(G)$.
We say that $C\subseteq E(G)$ is a {\em cycle} of $G$ if $C$ induces an eulerian subgraph of $G$.
It is a {\em polygon} if it induces a connected subgraph where every vertex has degree two.
A cycle $C$ of $G$ is an {\em even cycle} of $(G,\Sigma)$ if $|C\cap\Sigma|$ is even.
A set of the form $\Gamma=\Sigma\triangle\delta_G(U)$ where $\delta_G(U)$ is a cut, is a {\em signature} of $(G,\Sigma)$.
Observe that $(G,\Sigma)$ and $(G,\Gamma)$ have the same set of even cycles.

We denote by $\cycle(G)$ the set of all cycles of $G$ and by $\ecycle(G,\Sigma)$ the set of all even cycles of $(G,\Sigma)$.
The set $\cycle(G)$ are exactly the cycles of the graphic matroid with representation $G$ and we identify $\cycle(G)$ with that matroid.
The set $\ecycle(G,\Sigma)$ are exactly the cycles of a binary matroid that is a single lift of a graphic matroid, this is known as an even-cycle matroid \cite{guenin23}.
We identify $\ecycle(G,\Sigma)$ with that matroid.
We use the following folklore result that says that we can represent as signed graphs, binary matroids that are graphic after contraction of a single element.
This follows readily, by considering the $0,1$ representation of the binary matroid.
\begin{remark}\label{lift}
Let $M$ be a binary matroid and let $e\in E(M)$ where $e$ is not a loop.
If $M/e=\cycle(G)$ then $M=\ecycle(G',\Sigma)$ where $G'$ is obtained from $G$ by adding loop $e$ and $\Sigma\subseteq E(G)$ with $e\in\Sigma$.
\end{remark}
\noindent
Finally, we will need a characterization of the cocycles of even-cycle matroids.
\begin{proposition}[\cite{pivotto11} Remark 2.1]\label{cocircuit}
Let $(G,\Sigma)$ be a signed graph.
The cocycle of $\ecycle(G,\Sigma)$ are the cuts of $G$ and the signatures of $(G,\Sigma)$.
\end{proposition}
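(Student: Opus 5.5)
The plan is to argue entirely in the cycle space over $GF(2)$, identifying subsets of $E(G)$ with their characteristic vectors in $GF(2)^{E(G)}$. The one standard fact I rely on is that in a binary matroid $N$ the cycles (disjoint unions of circuits) form a subspace $\mathcal Z(N)$. The cocycles of $N$ are then exactly the members of the orthogonal complement $\mathcal Z(N)^\perp$, i.e.\ the sets meeting every cycle in an even number of elements. The goal is to compute $\mathcal Z(\ecycle(G,\Sigma))^\perp$ and show that it equals $\{\delta_G(U)\}\cup\{\Sigma\triangle\delta_G(U)\}$, with $U$ ranging over subsets of $V(G)$.

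First I record the two subspaces involved. Let $Z:=\cycle(G)$ be the cycle space of $G$. By definition,
\[
\mathcal Z(\ecycle(G,\Sigma))=\{C\in Z:\ |C\cap\Sigma| \mbox{ even}\}=Z\cap\Sigma^\perp .
\]
Here $\Sigma^\perp$ is the hyperplane of vectors orthogonal to $\Sigma$. It is classical that $Z^\perp$ is the cut space $\{\delta_G(U):U\subseteq V(G)\}$. This also holds when $G$ has loops, since a loop lies in no cut and is itself a cycle.

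Next I apply the finite-dimensional identity $(X\cap Y)^\perp=X^\perp+Y^\perp$ over $GF(2)$. It follows from $(X^\perp+Y^\perp)^\perp=X\cap Y$ together with $W^{\perp\perp}=W$. This gives
\[
\mathcal Z(\ecycle(G,\Sigma))^\perp=Z^\perp+\spn\{\Sigma\}=\{\delta_G(U)\}\cup\{\Sigma\triangle\delta_G(U)\},
\]
which is precisely the set of cuts together with the signatures. For completeness I also check the easy inclusion directly: a cut meets every cycle of $G$ evenly, and for an even cycle $C$ we have $|C\cap(\Sigma\triangle\delta(U))|\equiv|C\cap\Sigma|+|C\cap\delta(U)|\equiv 0 \pmod 2$.

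The main point needing care is the identification of cocycles with $\mathcal Z^\perp$. Cocycles of $N$ are the cycles of $N^*$, and for binary matroids the cycle space of $N^*$ is the orthogonal complement of that of $N$. I would cite this standard fact rather than reprove it. I would also note the degenerate case where $\Sigma$ is itself a cut, so that all cycles are even: then $\spn\{\Sigma\}\subseteq Z^\perp$, the signatures are themselves cuts, and the statement remains consistent.
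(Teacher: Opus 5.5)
Your argument is correct. Note that the paper itself gives no proof of this statement: it is quoted from \cite{pivotto11} (Remark 2.1) and used only as a black box in the proof of Proposition~\ref{4-inter}. There, it is used to conclude that the cocircuit $D$ containing the loop $\Omega$ must be a signature, and that a two-element cut inside $D$ is a cocycle.

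Your route is the standard linear-algebraic derivation. You write the cycle space of $\ecycle(G,\Sigma)$ as $Z\cap\Sigma^\perp$, with $Z$ the cycle space of $G$. You then use $(X\cap Y)^\perp=X^\perp+Y^\perp$ over $GF(2)$, which is valid because the standard form is nondegenerate, so $W^{\perp\perp}=W$. Finally, $Z^\perp$ is the cut space, and your remark that a loop is a cycle lying in no cut correctly covers loops. The argument is complete modulo the classical fact you cite, that the cocycle space of a binary matroid is the orthogonal complement of its cycle space.

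Compared with the bare citation, your version buys self-containment and a bit of extra structure. It exhibits the cocycle space as the cut space extended by the single vector $\Sigma$. Its dimension therefore exceeds that of the cut space by exactly one precisely when $\Sigma$ is not a cut, which is the rank-one increase expected of a single lift of $\cycle(G)$. It also shows that the degenerate case $\Sigma\in Z^\perp$, where $\ecycle(G,\Sigma)=\cycle(G)$ and signatures are just cuts, needs no separate treatment.
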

We are now ready for the principal proof in this section.
\begin{myproof}[Proposition \ref{4-inter}]
We may assume $|C\cap D|\geq 6$.
It suffices to prove the following weaker statement,
\begin{equation}\tag{$\star$}
\begin{aligned}
&\text{There exists a circuit $C'$ with $\Omega\in C'$ where}\\
&4\leq |C'\cap D|<|C\cap D|
\text{ and } C'\cap D\subseteq C\cap D.
\end{aligned}
\end{equation}
Suppose for a contradiction that ($\star$) does not hold for $M$ with $C$ and $D$ satisfying (a)-(c).
We assume that $M$ among all such counterexamples minimizes $|E(M)|$.
\begin{myclaim}\label{inter-cl1}
Let $\hat{M}=M/I\setminus J$ where $\Omega\notin I\cup J\neq\emptyset$, $I\cap J=\emptyset$ and $(I \cup J) \cap C \cap D = \emptyset$.
Then, $\hat{C}=C\setminus I$ is not a circuit of $\hat{M}$ or $\hat{D}=D\setminus J$ is not a cocircuit of $\hat{M}$.
\end{myclaim}
\begin{proofofclaim}
Suppose for a contradiction that $\hat{C}$ is a circuit of $\hat{M}$ and that $\hat{D}$ is a cocircuit of $\hat{M}$.
As $\hat{C}\cap\hat{D}=C\cap D$, we have $\Omega\in \hat{C}\cap\hat{D}$, and $|\hat{C}\cap\hat{D}|\geq 6$.
It follows that (a)-(c) hold for $\hat{M}$ with $\hat{C},\hat{D}$.
By our minimality assumption, there exists a circuit $F$ of $\hat{M}$ with $\Omega\in F$ and $4\leq |F\cap\hat{D}|<|\hat{C}\cap\hat{D}|$.
By definition of minors, there exists $I'\subseteq I$ and $J'\subseteq J$ such that
$C'=F\cup I'$ is a circuit of $M$ and $D=\hat{D}\cup J'$.
Then $C'\cap D=(F\cup I')\cap (\hat{D}\cup J')=F\cap \hat{D}$.
It follows that $4\leq |F\cap\hat{D}|=|C'\cap D|$ and $|C\cap D|=|\hat{C}\cap\hat{D}|>|F\cap\hat{D}|=|C'\cap D|$.
But then ($\star$) holds for $M$, a contradiction.
\end{proofofclaim}
\begin{myclaim}\label{inter-cl2}
$C=D$ and all elements of $E(M)\setminus C$ are in the span of $C$.
\end{myclaim}
\begin{proofofclaim}
If there exists $e\in C\setminus D$ then let $\hat{M}=M/e$.
Then $C\setminus\{e\}$ is a circuit of $\hat{M}$ and $D$ is a cocircuit of $\hat{M}$, contradicting Claim \ref{inter-cl1}.
If there exists $e\in D\setminus C$ then let $M'=M\setminus e$.
Then $D\setminus\{e\}$ is a cocircuit of $\hat{M}$ and $C$ is a circuit of $\hat{M}$, contradicting Claim \ref{inter-cl1}.
Thus we have $C=D$.
If there exists $e\notin C=D$ where $e$ is not in the span of $C$, let $\hat{M}=M/e$.
Then $C$ is a circuit of $\hat{M}$ and $D$ is a cocircuit of $\hat{M}$, contradicting Claim \ref{inter-cl1}.
\end{proofofclaim}
Denote by $\Omega, e_1,\ldots,e_n$ the elements of $C=D$.
Note, $n\geq 5$ since $|C\cap D|\geq 6$.
Let $F=\{f_1,\ldots,f_m\}$ denote the elements in $E(M)\setminus C$. 
Pick $i\in[m]$.
By Claim~\ref{inter-cl2}, there are exactly two circuits $L,L'$ in $C\cup\{f_i\}$ and in addition $L\tr L'=C$.
Thus exactly one of $L, L'$ contains $\Omega$. Denote that circuit by $C_i$.
If $|C_i|\geq 5$ then $4\leq |C_i\cap D|<|C\cap D|$ and ($\star$) holds with $C'=C_i$.
Because circuits and cocircuits have even intersection in binary matroids, for every $i\in[m]$, $C_i$ is a triangle (a circuit of cardinality three) with elements say, $e_i,f_i,\Omega$.

Let $N=M/\Omega$.
Then $C\setminus\{\Omega\}$ is a circuit of $N$.
Moreover, for every $i\in[m]$, $e_i,f_i$ are parallel in $N$.
This implies that every element of $f_i$ is paired with some element $e_i$, in particular, $n\geq m$.
Let $N'$ denote the matroid obtained from $N$ by replacing every parallel class $e_i,f_i$ by $e_i$.
Then $N'$ consists of a single circuit $C\setminus\{\Omega\}$, in particular, it is graphic.
It follows that $N=\cycle(G)$ where $G$ is the graph 
obtained from the polygon $C\setminus\{\Omega\}$ by adding for each $i\in[m]$ an edge $f_i$ parallel to $e_i$.
($G$ is unique up to reordering $e_1,\ldots,e_n$.)

By Remark \ref{lift}, $M=\ecycle(H,\Sigma)$ 
where $H$ is obtained from $G$ by adding a loop $\Omega$ and for a suitable $\Sigma\subseteq E(H)$ with $\Omega\in\Sigma$.
Proposition \ref{cocircuit} implies that the cocircuits $M$ are either cuts of $H$ or signatures of $(H,\Sigma)$.
Since $D$ is a cocircuit of $M$ and $\Omega$ is a loop of $H$, $D$ must be a signature of $(H,\Sigma)$.
Since the signatures of $(H,\Sigma)$ and $(H,D)$ are the same, we will assume that $\Sigma=D$.
Suppose that $n\geq m+2$.
Then there exists $e_{m+1},e_{m+2}\in D$ which are not parallel to edges in $F$ in the graph $H$.
It follows that $D'=\{e_{m+1},e_{m+2}\}$ is a cut of $H$, hence, by Proposition \ref{cocircuit}, $D'$ is a cocycle of $M$.
But $D'\subset D$, contradicting the fact that $D$ is a cocircuit.
Thus $n\leq m+1$ and $m\geq n-1\geq 4$.
In particular, there exists $f_1,f_2\in F$ and for $i=1,2$, $e_i,f_i$ are parallel in $H$.
It follows that $C'=C\setminus\{e_1,e_2\}\cup\{f_1,f_2\}$ is a polygon of $H$.
Moreover, $|C'\cap\Sigma|$ is even.
It follows that $C'$ is a circuit of $M$ which satisfies ($\star$) as required.
\end{myproof}
\subsection{A geometric application.}
Given a clutter $\zC$ and $S\in\zC$, we denote by $\chi(S)$ the characteristic vector for $S$, i.e.
for every $e\in E(\zC)$, $\chi(S)_e=1$ if $e\in S$ and $\chi(S)_e=0$ otherwise.
In this section we describe a variant of Proposition \ref{3-inter} that describes where the point $\chi(B')$ can be selected on the polyhedron $P(\zC)$. 
Namely, the following holds.
\begin{proposition}\label{3-neighbour}
Let $\zC$ be a binary clutter, let $S\in\zC$, and let $\bar{z}$ be an integral corner of $P(\zC)$.
Suppose that
\begin{enumerate}[(i)]
\item $\bar{z}\in H_S:=\{x\in\R^n:\sum_{i\in S}x_i=1\}$, 
\item every ray of $P(\zC)$ incident to $\bar{z}$ is contained in $H_S$, and 
\item there exists $B\in b(C)$ such that $|B\cap S|>1$.
\end{enumerate}
Then, there exists $B'\in b(\zC)$ with $|S\cap B'|=3$ and where $\chi(B')$ is a vertex of $P(\zC)$ adjacent to $\bar{z}$. 
\end{proposition}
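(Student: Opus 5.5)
The plan is to analyze the cone of edge directions of $P(\zC)$ at $\bar z$, use parity from the binary hypothesis, and then use Proposition~\ref{3-inter} to reach an intersection of size exactly three. First I will fix notation. Since $\bar z$ is an integral vertex of $P(\zC)$, it is a $0,1$ vector and so $\bar z=\chi(B_0)$ for some $B_0\in b(\zC)$. By (i), $|B_0\cap S|=1$. Every bounded edge at $\bar z$ joins $\bar z$ to an adjacent vertex, which is integral because $\bar z$ is an integral corner. Such a vertex is therefore $\chi(B_j)$ for some $B_j\in b(\zC)$, $j\in J$. The unbounded edges are rays $\bar z+\R_+ r$; each $r$ is nonnegative, since the recession cone of $P(\zC)$ is $\R^n_+$, and by (ii) each satisfies $\sum_{i\in S} r_i=0$. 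Hence $r_i=0$ for all $i\in S$. The basic tool is the standard fact that a pointed polyhedron lies in the cone at a vertex spanned by its edge directions:
\[
P(\zC)\subseteq \bar z+\cone\bigl(\{\chi(B_j)-\bar z: j\in J\}\cup\{r: r \text{ a ray direction at } \bar z\}\bigr).
\]

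Second, I will show that some neighbour meets $S$ in at least three elements. Let $f(x)=\sum_{i\in S}x_i$. The point $\chi(B)$ from (iii) lies in $P(\zC)$, so I write $\chi(B)-\bar z=\sum_j\lambda_j(\chi(B_j)-\bar z)+\sum \mu_r r$ with $\lambda,\mu\ge 0$. Applying $f$ kills the ray terms and gives
\[
|B\cap S|-1=\sum_j\lambda_j\bigl(|B_j\cap S|-1\bigr).
\]
Because $\zC$ is binary, every $|B_j\cap S|$ and $|B\cap S|$ is odd. So (iii) forces $|B\cap S|\ge 3$, and the left side is positive. Some $j$ must then have $|B_j\cap S|\ge 3$, since otherwise every term on the right is $0$. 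So the set $J^+=\{j: |B_j\cap S|\ge 3\}$ is nonempty.

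Third, I will show that some neighbour in $J^+$ has intersection exactly $3$. I would choose $j^*\in J^+$ extremal, for instance with $S\cap B_{j^*}$ inclusion-minimal, or minimizing $|B_{j^*}\cap S|$, and suppose $|B_{j^*}\cap S|\ge 5$. Proposition~\ref{3-inter} gives $B'\in b(\zC)$ with $T:=S\cap B'$ of size $3$ and $T\subseteq S\cap B_{j^*}$. I then decompose $\chi(B')-\bar z$ in the same cone and compare two families of coordinates.
\begin{enumerate}[(a)]
\item For $e\in S\setminus T$ we have $\chi(B')_e=0$ and $r_e=0$, so $-\bar z_e=\sum_j\lambda_j(\chi(B_j)_e-\bar z_e)$. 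Summed over $e\in S\setminus T$, this says the $\lambda$-weighted neighbours put total mass at most $\sum_{e\in S\setminus T}\bar z_e\le 1$ on $S\setminus T$. The mass is exactly $0$ on these coordinates if the unique element $s_0$ of $B_0\cap S$ lies in $T$.
\item Applying $f$ gives $2=\sum_j\lambda_j(|B_j\cap S|-1)$.
\end{enumerate}
From (a) and (b) I aim to show that some neighbour $B_j$ with positive $\lambda_j$ has $S\cap B_j\subseteq T\cup\{s_0\}$, which combined with parity gives $|S\cap B_j|=3$ (or a proper subset of $S\cap B_{j^*}$ in $J^+$, contradicting minimality). I would treat the cases $s_0\in T$ and $s_0\notin T$ separately. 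If that fails, I would fall back on applying the argument to a suitable face through $\bar z$, namely the one where $x_e=0$ for $e\in S\setminus(T\cup\{s_0\})$, and rerunning the second step there.

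The main obstacle is this last step. The convex-combination identity alone allows neighbours with large intersections and small fractional weights, so a pure counting argument over $f$ does not close. The real content has to come from the coordinatewise constraints in (a) on $S\setminus T$, coupled with the containment $T\subseteq S\cap B_{j^*}$ and a well-chosen extremal $j^*$. Getting this bookkeeping right, in particular handling where $s_0$ lies relative to $T$, is where I expect the difficulty.
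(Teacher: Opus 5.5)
Your setup matches the paper's: you write $\bar z=\chi(B_0)$ with $|B_0\cap S|=1$, take neighbours $\chi(B_j)$ and ray directions $r\ge\0$ vanishing on $S$, use the tangent-cone containment, apply $f$, and invoke Proposition~\ref{3-inter} to get $B'$ with $|S\cap B'|=3$. But the proof stops at the decisive step. You only say you ``aim to show'' that some positively weighted neighbour has $S\cap B_j\subseteq T\cup\{s_0\}$, and you plan an extremal choice of $j^*$, a case split on $s_0$, and a fallback to a smaller face. As written, this is a genuine gap.

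The summed estimate in (a) is also not justified. The coordinate identity gives $\sum_j\lambda_j\chi(B_j)_e=\bar z_e\bigl(\sum_j\lambda_j-1\bigr)$, and $\sum_j\lambda_j$ is not controlled by (b), because neighbours with $|S\cap B_j|=1$ contribute nothing there. In any case, summing over $S\setminus T$ discards exactly the information you need.

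The missing idea is to read the identity for $\chi(B')-\bar z$ one coordinate at a time, for each $e\in S\setminus(T\cup\{s_0\})$. There $\bar z_e=0$ and $\chi(B')_e=0$, and every term $\lambda_j\chi(B_j)_e$ and $\mu_r r_e$ is nonnegative. So $0=\sum_j\lambda_j\chi(B_j)_e+\sum_r\mu_r r_e$ forces $e\notin B_j$ for \emph{every} $j$ with $\lambda_j>0$, however small $\lambda_j$ is. This removes your worry about neighbours with large intersections and small weights.

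Hence every positively weighted neighbour satisfies $S\cap B_j\subseteq T\cup\{s_0\}$, so $|S\cap B_j|\le 4$, and by parity $|S\cap B_j|\in\{1,3\}$. Then (b), $2=\sum_j\lambda_j(|S\cap B_j|-1)$, gives some $j$ with $\lambda_j>0$ and $|S\cap B_j|=3$, and $\chi(B_j)$ is adjacent to $\bar z$ by construction. This is precisely the paper's argument, phrased there as $R\supseteq B_i\setminus B_0$ for a positively weighted neighbour $B_i$, whence $|S\cap B_i|\le|S\cap R|+|S\cap B_0|=4$.

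No case distinction on $s_0$, no extremal $j^*$ and no passage to a face is needed. Your Step 2 is also superfluous: parity upgrades (iii) to $|B\cap S|\ge3$, so you can apply Proposition~\ref{3-inter} to $B$ directly.
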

\noindent
Before we proceed with the proof recall the following result about tangent cones, see \cite{RW1998} Chapter 6, and also \cite{Ziegler1995}.
\begin{proposition}\label{cone}
Let $P$ be a polyhedron with a vertex $\bar{z}$. 
Let $L_1,\ldots,L_k$ be the faces of dimension $1$ that contain $\bar{z}$ and for all $i\in[k]$ let $v^i$ be a point in $L_i-\{\bar{z}\}$. 
Then $P\subseteq\bar{z}+\cone(v^i-\bar{z}:i\in[k])$. 
\end{proposition}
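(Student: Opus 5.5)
We will prove Proposition~\ref{cone} by comparing $P$ with the polyhedral cone cut out by the constraints that are tight at $\bar{z}$, and then identifying the extreme rays of that cone with the edges $L_1,\ldots,L_k$. Write $P=\{x:Ax\leq b\}$ and let $A'x\leq b'$ be the subsystem of constraints tight at $\bar{z}$, so that $A'\bar{z}=b'$. Every $x\in P$ satisfies $A'x\leq b'=A'\bar{z}$. Hence
\[
P\subseteq T:=\bar{z}+K,\qquad K:=\{d: A'd\leq \0\}.
\]
It therefore suffices to show $K\subseteq\cone(v^i-\bar{z}:i\in[k])$; in fact we will show equality.

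First, $K$ is pointed. Since $\bar{z}$ is a vertex, the rows of $A'$ have rank $n$, so $A'd\leq\0$ and $-A'd\leq\0$ together force $d=\0$. By the Minkowski--Weyl theorem for pointed polyhedral cones, $K$ is generated by its extreme rays, or $K=\{\0\}$, in which case there is nothing to prove. So the task reduces to showing that every extreme ray of $K$ is spanned by some $v^i-\bar{z}$.

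Let $d\neq\0$ span an extreme ray of $K$, and let $A''$ be the rows of $A'$ with $A''d=0$. By extremality, $\operatorname{rank}(A'')=n-1$. The constraints of $Ax\leq b$ outside $A'$ are strict at $\bar{z}$, so there is $\varepsilon>0$ such that $\bar{z}+td$ satisfies all of them for $t\in[0,\varepsilon]$. For these $t$, the constraints in $A'$ also hold, since $A'd\leq\0$. Thus the segment $\{\bar{z}+td:t\in[0,\varepsilon]\}$ lies in $P$. Moreover, this segment lies in the face
\[
G:=P\cap\{x:A''x=A''\bar{z}\}.
\]
Since $\operatorname{rank}(A'')=n-1$, $G$ has dimension at most $1$, and it contains two distinct points, so $\dim G=1$. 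As $G$ contains $\bar{z}$, we have $G=L_i$ for some $i$. Then $L_i$ lies on the line $\bar{z}+\R d$ and contains only points $\bar{z}+td$ with $t\geq 0$, because $\bar{z}$ is a vertex and hence an endpoint of $L_i$. Consequently $v^i-\bar{z}$ is a positive multiple of $d$.

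Conversely, each $v^i\in P$ gives $A'(v^i-\bar{z})\leq\0$, so $v^i-\bar{z}\in K$. Hence
\[
K=\cone(v^i-\bar{z}:i\in[k]),
\]
and the inclusion $P\subseteq\bar{z}+K$ gives the result.

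The main obstacle is the middle step: matching the extreme rays of $K$ with the edges of $P$ at $\bar{z}$. This requires (a) that a small step along $d$ stays in $P$, which uses strictness of the non-tight constraints, and (b) that the direction of $L_i$ from $\bar{z}$ is $+d$ rather than $-d$, which uses that $\bar{z}$ is a vertex and therefore an endpoint of $L_i$. Everything else is standard cone theory.
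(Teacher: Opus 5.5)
Your proof is correct and complete. $\bar z+K$ with $K=\{d:A'd\le\0\}$ is exactly the tangent cone of $P$ at $\bar z$, and matching its extreme rays to the edge directions $v^i-\bar z$ through the rank-$(n-1)$ active subsystem is the standard fact the paper invokes by citing Rockafellar--Wets (Ch.~6) and Ziegler without giving a proof; your self-contained version also handles unbounded edges and non-full-dimensional $P$ directly, which matters because the paper applies the proposition to the unbounded polyhedron $P(\zC)$.
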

\begin{myproof}[Proposition \ref{3-neighbour}]
By hypothesis (iii) and Proposition \ref{3-inter} there exists $R\in b(\zC)$ with $|S\cap R|=3$. Then $\chi(R)\in P(\zC)$.
Let $L_1,\ldots,L_k$ denote the faces of dimension~$1$ of $P(\zC)$ that contain $\bar{z}$.
We may assume that for some $1\leq t \leq k$, $L_1,\ldots,L_t$ are bounded faces (i.e. line segments) and that $L_{t+1},\ldots,L_k$ are rays.
For $i\in[t]$, denote by $y^i$ the extreme point of $P(\zC)$ in $L_i\setminus\{\bar{z}\}$.
Observe that $y^i=\chi(B_i)$ for some $B_i\in b(\zC)$ (\cite{cornuejols01} Remark 1.16).
Since the recession cone of $P(\zC)$ is $\R_+^{E(\zC)}$ for $i\in[k]\setminus[t]$, $L_i$ is the extreme ray $\bar{z}+\R_+{\bf 1}_{\alpha(i)}$ for some index $\alpha(i)\in E(\zC)$.
It follows from Proposition~\ref{cone} that,
\begin{equation}\label{cone-eq1}
\chi(R)=\bar{z}+\sum_{i\in[t]} \lambda_i (y^i-\bar{z}) + \sum_{i\in[k]\setminus[t]} \lambda_i{\bf 1}_{\alpha(i)},
\end{equation}
where $\lambda_i\geq 0$ for all $i\in[k]\setminus[t]$.
By (i), $\chi(S)^\top\bar{z}=1$ and by (ii) $\alpha(i)\notin S$ for every $i\in[k]\setminus[t]$.
Left multiplying \eqref{cone-eq1} by $\chi(S)^\top$ then yields,
\begin{equation}\label{cone-eq2}
3=|S\cap R|=1+\sum_{i\in[t]}\lambda_i(|S\cap B_i|-1).
\end{equation}
In particular, there exists $\hat{\i}\in[t]$ for which $|S\cap B_{\hat\i}|\geq 2$.
We have $\bar{z}=\chi(B_0)$ for some $B_0\in b(\zC)$.
Because of \eqref{cone-eq1} we have that $R$ contains the support of,
\[
\bar{z}+\sum_{i\in[t]} \lambda_i (y^i-\bar{z}).
\]
In particular, $R\supseteq B_{\hat\i}\setminus B_0$. Hence,
\[
3=|S\cap R|\geq |S\cap (B_{\hat\i}\setminus B_0)|\geq |S\cap B_{\hat\i}|-|S\cap B_0|=|S\cap B_{\hat\i}|-1.
\]
Thus $|S\cap B_{\hat\i}|\leq 4$, $|S\cap B_{\hat\i}|\geq 2$ and since $\zC$ is binary we have $|S\cap B_{\hat\i}|=3$.
Then $B'=B_{\hat\i}$ satisfies the outcome of the proposition.
\end{myproof}
\section{The case with few active dual variables.}\label{sec-small}
We will show that Conjecture~\ref{corner-cn} holds when the number of active dual variables is small.
First we need the straightforward observation.
\begin{remark}\label{binary}
Let $\zC$ be a binary clutter that contains a non-empty set.
Then no set of $\zC$ is the symmetric difference of an even number of sets of $\zC$.
\end{remark}
We also require the following easy observation about integral corners.
\begin{remark}\label{corner-rem}
Let $\zC$ be a clutter and let $v$ be an integral corner of $P(\zC)$.
Let $L$ be a face of dimension $1$ containing the vertex $v$. Then either,
\begin{enumerate}[(i)]
\item $L$ is a line segment with an endpoint $\chi(B)\neq v$ such that $B\in b(\zC)$, or
\item $L$ is a half-line, $\{v+\lambda \1_j : \lambda\geq 0\}$ for some $j\in E(\zC)$.
\end{enumerate}
Moreover, if a face $F$ contains $v$, then $L\subset F$ if and only if $L\cap F\setminus\{v\}\neq\emptyset$.
\end{remark}
%
\noindent
Given a set $S\subseteq\R^n$, $\aff(S)$ defines the affine hull of $S$.
We are now ready for the principal result of this section.
\begin{proposition}\label{small}
Let $\zC$ be a binary clutter and let $w$ be minimal weights of $\zC$.
Let $F$ denote the $w$-optimal face of $P(\zC)$.
Suppose that $F$ contains an integral corner $v$ and suppose that $\sigma(F)\leq 6$.
Then \eqref{dual} has an optimal solution that is half-integer.
\end{proposition}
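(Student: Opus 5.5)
Let $T\subseteq\zC$ be the sets $S$ whose constraint $\chi(S)^\top x\ge 1$ is tight on all of $F$. By complementary slackness, any optimal dual solution is supported on $T$. Since $w$ is minimal, every $j$ lies in a $w$-minimum cover, so the nonnegativity constraints $x_j\ge0$ are not implicit equalities of $F$. Hence the $\sigma(F)\le 6$ constraints tight on $F$ are exactly the constraints $S\in T$, so $|T|\le 6$. The problem becomes: find a half-integral $y\ge\0$ supported on $T$ with $\sum_{S\in T}y_S\chi(S)=w$. We may restrict attention to $E':=\bigcup_{S\in T}S$; for $j\notin E'$ we need $w_j$ to satisfy the dual constraint, which holds trivially since $y$ is supported on $T$. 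Note that equality is only needed for $j$ in the support of some primal optimum, but we aim for the stronger equality $M_T^\top y=w$ on the relevant coordinates.

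\textbf{Existence and structure of solutions.} First I would take an arbitrary optimal dual solution $y^*$. Such a solution exists and is supported on $T$, and by complementary slackness $M_T^\top y^*=w$ on the coordinates $j$ with $v_j>0$, or more generally on those $j$ positive in some point of $F$. The set of optimal duals is a polytope $Q$ inside the affine space $\{y\in\R^T: M_T^\top y = w \text{ on } \mathrm{supp}(F)\}$; let $y$ be a vertex of $Q$. Then $y$ is the unique solution of a square nonsingular subsystem of the rows of $M_T$ restricted to columns, using at most $6$ variables. By Cramer's rule its denominators divide a $0,1$ determinant of size at most $6$.

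\textbf{Using binarity and the corner.} The heart of the proof is to show that the relevant determinant is a power of $2$ at most $2$, i.e.\ that the support matrix has no bad submatrix. I would argue by looking at the columns: each $j\in\mathrm{supp}$ gives a $0,1$ vector $c_j\in\{0,1\}^T$, and $y$ solves $c_j^\top y=w_j$. Here $v=\chi(B_0)$ with $B_0\in b(\zC)$, so every $S\in T$ meets $B_0$ in exactly one element. Since $v$ is an integral corner, each edge at $v$ inside $F$ goes to some $\chi(B_i)$, $B_i\in b(\zC)$, or is a ray $\1_j$ with $j$ outside all $S\in T$. By binarity each $|S\cap B_i|$ is odd, and being tight on $F$ forces $|S\cap B_i|=1$. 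I would then use Proposition~\ref{cone} together with Remark~\ref{corner-rem} to express the affine hull of $F$ through $v$ and these neighbours. This shows that the row space of $M_T$ restricted to the support is spanned by differences $\chi(B_i)-\chi(B_0)$, which are $\{0,\pm1\}$ vectors hitting each $S\in T$ in net $0$. Binarity enters through Remark~\ref{binary}: no $S\in T$ is a symmetric difference of an even number of others. This rules out linear dependencies of the type that yield fractional determinants, such as the Fano-type $\frac13$ configuration. With at most six rows, the configurations of $0,1$ matrices of order $\le6$ whose nonsingular square submatrices have determinant not in $\{\pm1,\pm2\}$ are few (e.g.\ the $3\times3$ circulant of determinant $2$ is fine, while the Fano-like $\det=3$ structures require $7$ rows or odd-cycle structure). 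I would show each such configuration forces either an odd symmetric-difference relation, contradicting Remark~\ref{binary}, or a cover $B$ with $|S\cap B|\ge 3$.

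\textbf{Main obstacle.} The hardest step is this finite case analysis: showing that with $|T|\le6$ and binarity, a vertex of $Q$ is half-integral. I would first try to reduce to a parity argument. Consider $y$ modulo the lattice: take $2y$ and show $M_T^\top(2y)\equiv 0$ forces integrality, using that over $GF(2)$ the vectors $\chi(S)$, $S\in T$, have no dependency with an even number of terms. If a cover $B$ with $|S\cap B|\ge3$ for some $S\in T$ appears, I would invoke Proposition~\ref{3-neighbour} to obtain a neighbour $\chi(B')$ of $v$ with $|S\cap B'|=3$. Since $B'$ lies on an edge at $v$, tightness of that edge then pins down the relevant sums of $y$ and reduces $|T|$, allowing induction on $|T|$. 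If the parity argument fails for some configuration, I would fall back to an explicit enumeration of $0,1$ matrices with at most $6$ rows.
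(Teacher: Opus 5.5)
Your opening reduction is right: minimality of $w$ means no nonnegativity constraint is tight on $F$, so optimal duals live on the at most six members tight on $F$. Moreover, since every element lies in a $w$-minimum cover, complementary slackness gives $M_T^\top y=w$ on \emph{all} coordinates for every optimal $y$. The rest is a plan rather than a proof, and the concrete steps you sketch do not work:
\begin{itemize}
\item The differences $\chi(B_i)-\chi(B_0)$ over neighbours of $v$ in $F$ span $\aff(F)-v=\ker M_T$. That is the orthogonal complement of the row space, not the row space, so they give no control over the square subsystems whose determinants you want to bound.
\item The determinant heuristics are wrong. $J_4-I_4$ is a $0,1$ matrix with determinant $-3$, and $0,1$ matrices of orders $5$ and $6$ reach determinants $5$ and $9$, so such configurations neither ``require 7 rows'' nor are few. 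A bound $|\det|\le 2$ would suffice via Cramer's rule, but you give no reason it should hold for the subsystem defining your vertex. The paper proves nothing of this kind and instead uses the right-hand side $w$ essentially.
\item You invert Remark~\ref{binary}. It excludes $GF(2)$ dependencies among members with an \emph{odd} number of terms; even ones do occur (the four triangles of $K_4$ in $\zO(K_4)$ sum to zero).
\item The neighbour $\chi(B')$ supplied by Proposition~\ref{3-neighbour} has $|S\cap B'|=3$, so it lies outside $F$. Complementary slackness only sees $F$, so that edge says nothing about optimal duals and yields no induction on $|T|$.
\end{itemize}

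What is missing are the ideas that make a finite analysis possible at all.
\begin{itemize}
\item The paper takes a counterexample minimizing $\1^\top w$. If some optimal dual has $z_R\ge 1$, then $w-\chi(R)$ is again minimal and its optimal face contains $F$, so $\sigma$ does not grow, and a half-integral solution for it lifts back. Hence $z_S<1$, so every non-redundant column has between $2$ and $\sigma-2$ ones and weight at most its column sum minus one.
\item The integral corner enters because $\aff(F)$ is generated by the edges at $v$ inside $F$, whose other ends are minimum covers or zero-weight rays. Thus any $S\in\zC$ meeting every minimum cover exactly once and avoiding zero-weight elements is tight on $F$.
\item Binarity is then used in this form: an odd symmetric difference of tight members that meets every minimum cover once is itself a member. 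Combined with the previous point, this gives $GF(2)$-rank at least $4$. It also forces a minimum cover of size $3$ (otherwise $\sigma\ge 8$) and a size-$2$ minimum cover with both column sums equal to $3$.
\item The rank-$\ge 5$ and rank-$4$ cases are then settled explicitly, via illegal covers and the matrix $A^+$, by exhibiting a half-integral solution.
\end{itemize}
Without these steps your ``fall back to enumeration'' has nothing finite to enumerate. In your setup the number of columns, the weights, and which sets belong to $\zC$ are all unconstrained.
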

\begin{proof}
We say that minimal weights $w'$ of $\zC$ is a {\em counterexample} if 
(i) for the $w'$-optimal face $F'$ we have $\sigma(F')\leq 6$, 
(ii) $F'$ contains an integral corner $v$, and
(iii) \eqref{dual} does not admit an optimal solution that is half-integer.
A counterexample $w'$ is {\em extremal} if among all counterexamples it minimizes $\1^\top w'$.
Since the non-negative numbers are well-quasi ordered if there exists a counterexample, then there exists an extremal counterexample.
We will thus proceed by contradiction and assume that $w$ is an extremal counterexample.

We write $\sigma$ for $\sigma(F)$ throughout.
Denote by $\tau(w)$ the common optimal value of \eqref{primal} and \eqref{dual}.
Since $v\in F$ is a vertex, $v=\chi(B_v)$ for some $B_v\in b(\zC)$.
It follows that $\tau(w)$ is the weight of the $w$-minimum covers of $\zC$. 
Let $j\in E(\zC)$. Since $w$ is minimal, there exists a $w$-minimum cover $B$ of $\zC$ with $j\in B$.
Thus $\bar{x}:=\chi(B)\in F$. But $\bar{x}_j=1$, in particular, $x_j\geq 0$ is not tight for $F$.
It follows that the tight constraints for $F$ are of the form, $x(S)=1$ \footnote{$x(S):=\sum_{i\in S}x_i$.}
for every $S\in\zD$ where $\zD$ is some subset of $\zC$ with $|\zD|=\sigma$.
Define the following primal-dual pair obtained from \eqref{primal} and \eqref{dual} by restricting ourselves to the clutter $\zD$,
\begin{equation}
\min\{w^\top x:M(\zD)x\geq\1,x\geq\0\}, \label{primal-rest}
\end{equation}
\begin{equation}
\;\;\max\{\1^\top z: M(\zD)^\top z\leq w, z\geq\0\}. \label{dual-rest}
\end{equation}
Let $\bar{y}$ denote an optimal solution to \eqref{dual}.
By Complementary Slackness (CS), $\bar{y}_S=0$ for every $S\in\zC\setminus\zD$.
Define $\bar{z}$ where $\bar{z}_S=\bar{y}_S$ for every $S\in\zD$.
We say that $\bar{z}$ is the {\em restriction} of $\bar{y}$ to $\zD$ and that $\bar{y}$ is the {\em extension} of $\bar{z}$ to $\zC$.
Then $\bar{z}$ is feasible for \eqref{dual-rest}, in particular, the optimal value for \eqref{dual-rest} is $\geq$ the optimal value for \eqref{dual}.
Hence, the optimal value for \eqref{primal-rest} is $\geq$ the optimal value for \eqref{primal}.
But \eqref{primal-rest} is a relaxation of \eqref{primal}, so both LPs have the same common value $\tau(w)$.
In particular, every $w$-minimum cover of $\zC$ is a $w$-minimum cover of $\zD$ (but the converse need not hold).
When $w$ is clear from context, we refer to $w$-minimum covers simply as minimum covers.
A {\em transversal} of a clutter is a subset of the groundset that intersects every member of the clutter exactly once.
Throughout this proof, $A$ denotes the matrix $M(\zD)$. 
Denote by $r^1,\ldots,r^\sigma$ the rows of $A$ where $r^i$ is the characteristic vector of member $S_i\in\zD$.
\begin{myclaim}\label{cl-dual}
\begin{enumerate}[(a)]
\item For every $j\in E(\zC)$ there exists a minimum cover $B$ of $\zD$ with $j\in B$.						
\item Every minimum cover $B$ of $\zD$ is a transversal of $\zD$.										
\item Let $S\in\zC$. If for every minimum cover $B$ of $\zD$, $|S\cap B|=1$ and $\forall j\in E(C): \ w_j=0$ we have $j\notin S$, then $S\in\zD$.					
\item For every optimal solution $\bar{x}$ of \eqref{primal-rest} we have $M(\zD)\bar{x}=\1$.					
\item For every optimal solution $\bar{z}$ of \eqref{dual-rest} we have $M(\zD)^\top\bar{z}=w$.					
\item There is no $S\in\zD$ for which $\bar{z}_S=0$ for every optimal solution $\bar{z}$ of \eqref{dual-rest}.		
\item For every $j\in E(\zC)$, $\col_j(A)=\0$ if and only if $w_j=0$.										
\end{enumerate}
\end{myclaim}
\vspace{-0.1in}
\begin{proofofclaim}
{\bf (f)}
By strict CS, there exists an optimal solution $\bar{y}$ of \eqref{dual} where $\bar{y}_S>0$ for every $S\in\zD$.
Then the restriction of $\bar{y}$ to $\zD$ is an optimal solution to \eqref{dual-rest}.
{\bf (a)}
Follows from the fact that \eqref{primal} and \eqref{primal-rest} have the same optimal value $\tau(w)$ and the fact that $w$ is minimal.
{\bf (d)}
Follows from (f) and CS.
{\bf (b)}
If $B$ is a minimum cover, then $\chi(B)$ is an optimal solution to \eqref{primal-rest}. The result now follows from (d).
{\bf (e)}
Follows from (a) and CS.
{\bf (c)}
Let $L_1,\ldots,L_k$ denote the faces of dimension~$1$ containing $v$ that are included in $F$. 
Observe that $\spn(L_1\cup\ldots\cup L_k)=\aff(F)$.
To show that $S\in\zD$ we need to show that $x(S)\geq 1$ is tight for every $\bar{x}\in F$.
It suffices to show that $x(S)\geq 1$ is tight for every $L=L_i$, $i\in[k]$.
We consider separately the two outcomes of Remark~\ref{corner-rem}.
First, suppose that $L$ is a line segment, as in outcome~(i).
Then $L$ is a line segment with ends $v=\chi(B_0)$ and $\chi(B)$ where $B_0,B_i\in b(\zC)$.
By hypothesis $x(S)\geq 1$ is tight for $\chi(B_0)$ and $\chi(B)$, hence by Remark~\ref{corner-rem}, for $L$ as required.
Consider outcome (ii) in Remark~\ref{corner-rem} for $L$.
Then $L=\{v+\lambda\1_j:\lambda\geq 0\}$ for some $j\in E(\zC)$.
Since $w$ is minimal, we must have $w_j=0$, therefore, $j\notin S$. Therefore,  $v$ and $v+\1_j$ are tight for $x(S)\geq 1$, which directly results that it is tight for $L$ as required.
{\bf (g)}
Suppose $\col_j(A)=\0$.
By (a) there exists a cover $B\ni j$ of $\zD$ with $w(B)=\tau(w)$.
Because $\col_j(A)=\0$, $B':=B\setminus\{j\}$ is a cover of $\zD$.
Hence, $\tau(w)\leq w(B')=w(B)-w_j=\tau(w)-w_j$, i.e. $w_j\leq 0$.
Suppose $w_j=0$ and suppose that there exists $S\in\zD$ with $j\in S$.
Then for every feasible solution $\bar{z}$ of \eqref{dual-rest} we have $\bar{z}_S=0$, contradicting (f).
\end{proofofclaim}
\begin{myclaim}\label{cl-dual1}
Let $\bar{z}$ be an optimal solution to \eqref{dual-rest}. Then $\bar{z}_S<1$ for every $S\in\zD$.
\end{myclaim}
\begin{proofofclaim}
Suppose for a contradiction that we have an optimal solution $\bar{z}$ of \eqref{dual-rest} with $\bar{z}_R\geq 1$ for some $R\in\zD$.
Let $w'=w-\chi(R)\geq\0$.
Define, $\bar{y}$ where
\[
\bar{y}_S=
\begin{cases}
\bar{z}_S -1 & \text{if $S=R$} \\
\bar{z}_S & \text{if $S\in\zD$ and $S\neq R$} \\
0 & \text{if $S\in \zC\setminus\zD$.}
\end{cases}
\]
Note that $\bar{y}$ is a feasible solution for \eqref{dual} for weights $w'$ of value $\tau(w)-1$.
Let $\bar{x}\in F$.
By CS, $\bar{x}(R)=1$.
Then $w'(\bar{x})=w(\bar{x})-\bar{x}(R)=\tau(w)-1$.
It follows that $\bar{x}$ and $\bar{y}$ form a pair of optimal solutions of \eqref{primal} and \eqref{dual} for weights $w'$.
As $\bar{x}$ was arbitrary, $F\subseteq F'$ where $F'$ is the $w'$-optimal face.
Therefore, $\sigma(F')\leq\sigma(F)\leq 6$ and $w'$ is minimal.
Since $w$ is an extremal counterexample and $\1^\top w'<\1^\top w$,
\eqref{dual} has a half-integer optimal dual solution $y'$ for weights $w'$.
Then let $y''$ where $y''_R=y'_R+1$ and $y''_S=y'_S$ for every $S\in\zC \setminus \{R\}$.
Thus, $y''$ is a half-integral feasible solution to \eqref{dual} with objective value $\tau(w)$, and is therefore optimal, a contradiction.
\end{proofofclaim}
\noindent
A column $j$ of a matrix $N$ is {\em implied} if either (i) $\col_j(N)=\0$, (ii)  $\col_j(N)=\col_k(N)$ for some $k\neq j$, or (iii) $\col_j(N)$ is the sum of other columns of $N$.
Let $\hat{A}$ be matrix obtained from $A$ by repeatedly removing any implied column.
Denote by $\hat{w}$ the restriction of $w$ to columns of $\hat{A}$.
For $i\in[\sigma]$ denote by $\hat{r}^i$ the restriction of $r^i$ to $\hat{A}$ and denote by $\hat{S}_i$ the restriction of $S_i$ to the columns indices of $\hat{A}$.
\begin{myclaim}\label{cl-hat}
\begin{enumerate}[(a)]
\item $\hat{A}$ has a column with some entry $0$ and some entry $1$.
\item $A^\top z=w$ and $\hat{A}^\top z=\hat{w}$ have the same solutions.
\item if $\hat{A}^\top\bar{z}=\hat{w}$ and  $\bar{z}\geq\0$  then the extension of $\bar{z}$ to $\zC$ is an optimal solution to~\eqref{dual}.
\end{enumerate}
\end{myclaim}
\begin{proofofclaim}
{\bf (a)}
Otherwise $A=\0$, which implies $\emptyset\in\zC$, a contradiction.
{\bf (b)}
If $\bar{z}$ is a solution to $A^\top z=w$ then it is a solution to $\bar{A}^\top z=\hat{w}$ as the latter is a row subsystem of the former.
If $\bar{z}$ is a solution to $\hat{A}^\top z=\hat{w}$ then it is a solution to $A^\top z=w$ since, 
(i) every column in $A$ is a linear combination of columns in $\hat{A}$, and
(ii) $A^\top z=w$ has at least one solution because of Claim~\ref{cl-dual}(e).
{\bf (c)}
Let $\bar{x}$ be an optimal solution to \eqref{primal-rest}
Since $\bar{x}$ and $\bar{z}$ satisfy the CS conditions for \eqref{primal-rest} and \eqref{dual-rest}, $\bar{z}$ is an optimal solution to \eqref{dual-rest}.
As both \eqref{dual} and \eqref{dual-rest} have the same optimal value $\tau(w)$, the result follows.
\end{proofofclaim}
\begin{myclaim}\label{cl-col}
Let $j$ be the index of a column of $\hat{A}$. Then
\begin{enumerate}[(a)]
\item $\hat{w}_j\leq\1^\top\col_j(\hat{A})-1$, and 
\item $2\leq\1^\top\col_j(\hat{A})\leq\sigma-2$.
\end{enumerate}
\end{myclaim}
\begin{proofofclaim}
{\bf (a)}
Let $\bar{z}$ be any optimal solution to \eqref{dual-rest}.
Claim \ref{cl-dual1} implies that $\bar{z}_S<1$ for every $S\in\zD$. 
Then $w_j=\bar{z}^\top\col_j(\hat{A})<\1^\top\col_j(\hat{A})$ where the equality follows from Claim \ref{cl-dual}(e).
{\bf (b)}
Suppose $\1^\top\col_j(\hat{A})\leq 1$. 
Then by definition of $\hat{A}$, $\1^\top\col_j(\hat{A})=1$.
(a) implies $w_j=0$.
It follows then from Claim \ref{cl-dual}(g) that $\col_j(\hat{A})=\0$, a contradiction.
Suppose $\1^\top\col_j(\hat{A})\geq \sigma-1$.
Then by definition of $\hat{A}$, $\1^\top\col_j(\hat{A})=\sigma-1$.
But then by Claim \ref{cl-dual}(a), (b) we have some column $k$ of $\hat{A}$ for which $\1^\top\col_k(\hat{A})=1$, a contradiction.
\end{proofofclaim}
\noindent
Claims~\ref{cl-hat}(a) and \ref{cl-col}(b) imply that $\sigma\geq 4$.
Moreover, if $\sigma=4$ then every column of $\hat{A}$ has exactly two $1$s and two $0$s.
In that case, by Claim~\ref{cl-col}(a) and Claim \ref{cl-dual}(g), $\hat{w}=\1$.
Then $\bar{z}=\frac12\1$ is a solution to $\hat{A}z=\hat{w}$.
Claim~\ref{cl-hat}(c) then implies that \eqref{dual} has a half-integer solution, a contradiction.
Thus $\sigma\geq 5$.
\begin{myclaim}\label{cl-rank}
We have $r(\hat{A})\geq 4$ where $r(\cdot)$ denote the rank of a matrix over $\mathbb{F}_2$.
\end{myclaim}
\begin{proofofclaim}
Suppose for a contradiction, that $r(\hat{A})\leq 3$. Then for some $m\leq 3$ we have rows $r^1,\ldots,r^m$ of $A$ that span every row of $A$ over $\mathbb{F}_2$.
It follows from Remark~\ref{binary} that if $m\leq 2$ then $\sigma=m$ and that if $m=3$ then $\sigma\leq 4$.
But we observed that $\sigma\geq 5$, a contradiction.
\end{proofofclaim}
\begin{myclaim}\label{cl-2cover}
Consider sets $S_1,\ldots,S_{2t+1}\in\zD$ for some $t\in\Z_+$, and let $S$ be the symmetric difference of all of $S_i$, $i\in[2t+1]$.
Then for every minimum cover $B$ with $|B|=2$ we have $|B\cap S|=1$.
\end{myclaim}
\begin{proofofclaim}
Denote by $p,q$ the elements of $B$.
Let $A'$ be the row submatrix of $A$ with rows $r^1,\ldots,r^{2t+1}$.
Claim~\ref{cl-dual}(b) implies that $\1^\top\col_j(A')+\1^\top\col_k(A')=2t+1$.
Thus we may assume that $\1^\top\col_p(A')$ is odd and $\1^\top\col_q(A')$ is even.
It then follows that $p\in S$ and $q\notin S$.
\end{proofofclaim}
\begin{myclaim}\label{cl-3cover}
There exists a minimum cover $\{j,k,\ell\}$ with $\1^\top\col_j(\hat{A})=\1^\top\col_k(\hat{A})=\1^\top\col_\ell(\hat{A})=2$, and $\hat{w}_j=\hat{w}_k=\hat{w}_\ell=1$.
\end{myclaim}
\begin{proofofclaim}
Suppose for a contradiction that every minimum cover has cardinality at most two.
Since $r(A)\geq 4$ there exists rows $r^1,r^2,r^3,r^4$ of $A$ that are independent over $\mathbb{F}_2$.
For every $i\in[4]$ let $R_i$ be the symmetric difference of $S_t$ over all $t\in[4]\setminus\{i\}$.
Since $\zC$ is binary and by Claim~\ref{cl-dual}(a), $R_i\in\zC$.
By Claim~\ref{cl-2cover} and the hypothesis, $R_i$ is a transversal of the minimum covers of $\zD$.
Claim~\ref{cl-dual}(c), $R_i\in\zD$.
Rows of $A$ corresponding to $S_1,\ldots,S_4,R_1,\ldots,R_4$ are linearly independent over $\mathbb{F}_2$.
Hence, $\sigma\geq 8$, a contradiction.
Thus a minimum cover of cardinality three exists.
For the rest, Claim \ref{cl-col}(b) and $\sigma \geq 5$ implies each column $\{j,k,\ell\}$ has at least two ones and two zeros. Then Claim~\ref{cl-dual}(b) implies each tight constraint includes exactly one element $\{j,k,\ell\}$. Together with assumption $\sigma\leq 6$,  $\1^\top\col_j(\hat{A})=\1^\top\col_k(\hat{A})=\1^\top\col_\ell(\hat{A})=2$. Moreover, Claim~\ref{cl-dual}(a) implies $\hat{w}_j=\hat{w}_k= \hat{w}_{\ell}=1$.
\end{proofofclaim}
\noindent
The previous result implies immediately that $\tau(w)=3$.
\begin{myclaim}\label{cl-mixed}
There exists a minimum cover $\{p,q\}$ with $\1^\top\col_p(A)=\1^\top\col_q(A)=3$.
\end{myclaim}
\begin{proofofclaim}
Suppose first no minimum cover of cardinality two exists.
Then $\hat{w}=\1$ by Claim~\ref{cl-3cover}.
It follows that $\bar{z}=\frac12\1$ is a solution to $\hat{A}z=\hat{w}$.
Claim~\ref{cl-hat}(c) then implies that \eqref{dual} has a half-integer solution, a contradiction.
Thus there exists a minimum cover of cardinality two.
Suppose now that for every minimum cover $\{p,q\}$ we have $\1^\top\col_p(\hat{A})=4$ and $\1^\top\col_q(\hat{A})=2$.
By Claim~\ref{cl-col}(a), $\hat{w}_q=1$, thus $\hat{w}_p=2$.
Again, the extension of $\bar{z}=\frac12\1$ to $\zC$ is a half-integer solution to \eqref{dual}.
\end{proofofclaim}
\noindent
Because of Claim~\ref{cl-rank} one of Case 1 or Case 2 must hold.

\begin{description}
\item {\bf Case 1:} $r(A)\geq 5$.
\end{description}

Let $r^1,\ldots,r^5$ denote rows of $A$ that are independent over $\mathbb{F}_2$.
Denote by $S$ the symmetric difference of $S_1,\ldots,S_5$.
Let $B=\{j,k,\ell\}$ denote an arbitrary minimum cover of cardinality three.
We may assume that each of $\col_j(A)$ and $\col_k(A)$ has exactly two $1$s in rows $r^1,\ldots,r^5$ and that $\col_\ell(A)$ has exactly one $1$ in $r^1,\ldots,r^5$.
It then follows that $j,k\notin S$ and $\ell\in S$.
Hence, $B\cap S=\{\ell\}$.
Together with Claim \ref{cl-2cover}, it follows that $S$ is a transversal of the minimum covers of $\zD$.
Therefore, by Claim~\ref{cl-dual}(c), $S$ is the (unique) member of $\zD$ distinct from $S_1,\ldots,S_5$.
By Claim \ref{cl-mixed} there exists a minimum cover $\{p,q\}$ with $\1^\top\col_p(A)=\1^\top\col_q(A)=3$.
We may assume that $\col_p(A)$ has exactly three $1$s in rows $r^1,\ldots,r^5$.
But then $p\in S$, which implies $\1^\top\col_p(A)=4$, a contradiction. 

\begin{description}
\item {\bf Case 2:} $r(A)=4$.
\end{description}

A cover $B$ of $\zD$ is {\em illegal} if $w(B)\leq\tau(w)$ and $B$ is not a transversal of $\zD$.
Claim~\ref{cl-dual}(b) implies that no cover is illegal. 
Denote the row indices of $\hat{A}$ by $1,\ldots,6$ and the columns of $\hat{A}$ by $1,\ldots,n$ for some $n\in\Z_+$.
We may assume (after row and column permutations) by Claim \ref{cl-3cover} that $\{1,2,3\}$ is a minimum cover of $\zD$ and that 
$\col_1(\hat{A})$ has $1$s in rows $1,4$; 
$\col_2(\hat{A})$ has $1$s in rows $2,5$; and 
$\col_3(\hat{A})$ has $1$s in rows $3,5$.
We may assume by Claim \ref{cl-mixed} that $\{4,5\}$ is a minimum cover where $\1^\top\col_4(\hat{A})=\1^\top\col_5(\hat{A})=3$.
Consider first the case where one of columns $1,2,3$ have both $1$s in their column contained in the $1$s of column $4$ or $5$.
Then we may assume, that 
$\col_4(\hat{A})$ has $1$s in rows $1,2,4$, and 
$\col_5(\hat{A})$ has $1$s in rows $3,5,6$.
We have $\hat{w}_1=\hat{w}_2=\hat{w}_3=1$ and we may still assume $\hat{w}_4=1$, $\hat{w}_5=2$.
But then $\{2,3,4\}$ is an illegal cover.
Therefore, we may assume that,
$\col_4(\hat{A})$ has $1$s in rows $1,2,3$, and
$\col_5(\hat{A})$ has $1$s in rows $4,5,6$.
Observe that columns $1,2,3,4,5$ of $\hat{A}$ has rank $4$ over ${\mathbb F}_2$.
Consider the matrix that contains all columns $1-5$ of $\hat{A}$ as well as all non-implied columns that are in the span of columns $1-5$ of $\hat{A}$,
\[
A^+=
\begin{array}{c}
\tiny{
\begin{array}{ccccccccccc}
1 & \;2 & \;3 & \;4 & \;5 & 6 & \;7 & \;8 & 9 & 10 & 11\;
\end{array}}
\\
\left(
\begin{array}{ccc|cc|ccc|ccc}
1 & 0 & 0 & 1 & 0 & 0 & 1 & 1 & 1 & 0 & 0 \\
0 & 1 & 0 & 1 & 0 & 1 & 0 & 1 & 0 & 1 & 0 \\ 
0 & 0 & 1 & 1 & 0 & 1 & 1 & 0 & 0 & 0 & 1 \\ \hline
1 & 0 & 0 & 0 & 1 & 1 & 0 & 0 & 0 & 1 & 1 \\ 
0 & 1 & 0 & 0 & 1 & 0 & 1 & 0 & 1 & 0 & 1 \\ 
0 & 0 & 1 & 0 & 1 & 0 & 0 & 1 & 1 & 1 & 0  
\end{array}
\right).
\end{array}
\]
Observe that $\hat{A}$ is a column submatrix of $A^+$.
For each column $j$ of $A^+$ associate a weight $w^+_j$ that would be the weight $w_j$ if $j$ were a column of $\hat{A}$.
We already established that $w^+_1=w^+_2=w^+_3=w^+_4=1$ and $w^+_5=2$.
$\{1,4,9\}$ is a cover, and it is illegal if $w^+_9=1$, thus $w^+_9=2$.
Since $\{6,9\}$ is a minimum cover, $w^+_6=1$.
Similarly, $w^+_7=w^+_8=1$ and $w^+_{10}=w^+_{11}=2$.
Note, both columns $6$ and $7$ cannot be present in $\hat{A}$ for otherwise, $\{3,6,7\}$ would be an illegal cover.
Therefore, we may assume that $\hat{A}$ is contained in columns $1-5$ and $6,9$ of $A^+$.
It follows that we may assume that $\hat{A}$ is contained in columns $1-5$ and $6,9$ of $A^+$.
But then,
$
\bar{z}=(\frac12,0,\frac12,\frac12,1,\frac12)^\top
$
is a solution to $\hat{A}z=\hat{w}$.
Claim~\ref{cl-hat}(c) then implies that \eqref{dual} has a half-integer solution, a contradiction.
\qed
\end{proof}
\section{The proof of Theorem \ref{main-tech}.}\label{sec-proof}
\subsection{A general tool.}
We shall state a result from \cite{guenin25b} that gives a sufficient condition for a linear program to have a dyadic solution.
We require a number of definitions.
Consider $A\in\Z^{m\times n}$ and $b\in\Z^m$ and let $P$ be the polyhedron $\{x:Ax\geq b\}$.
Let $F$ be a face of $P$.
Denote by $I(F)$ the set of row indices of $A$ corresponding to the tight constraints for $F$, i.e. $i\in I(F)$ if and only if $\row_i(A)x=b_i$ for every $x\in F$. 
Face $F$ is {\em compliant} if for every $w\in\Z^n$ for which $F$ is contained in  the $w$-optimal face, the following linear program has an optimal solution that is dyadic,
\begin{equation}\label{eq-tool}
\max\{b^\top y:A^\top y=w, y\geq\0\}.
\end{equation}
A sequence of faces of $P$, $(F_0,F_1,\ldots,F_k)$ is a {\em face-chain rooted at $F_0$} if for every $\ell\in[k]$, $F_\ell$ is an immediate superface of $F_{\ell-1}$.
This face-chain is {\em well-marked} if $F_k$ is compliant and 
for every $\ell\in[k]$ there exists a vector $\rho\in\Z^n$ and an index $\hat{\i}\in I(F_{\ell-1})\setminus I(F_{\ell})$ such that, \footnote{Note that $\rho$ and $\hat\i$ depend on $\ell$.}
\begin{enumerate}[\;\;(g1)]
\item $\rho\in\aff(F_\ell)\setminus\aff(F_{\ell-1})$ and 
\item $\row_{\hat\i}(A)\rho-b_{\hat\i}=\pm2^p$ for some $p\in\Z_+$.
\end{enumerate}
Here is our key tool from \cite{guenin25b}, it is a combination of Theorem 2.2 and Proposition 3.1 in a different language but without essential changes to the proof.
%
\begin{theorem}\label{tool}
Let $P=\{x:Ax\geq b\}\subset\R^n$ be a polyhedron where all entries of $A,b$ are integer.
Let $\bar{w}\in\Z^n$ and let $\bar{F}$ denote the $\bar{w}$-optimal face of $P$ where $\bar{F}\cap\Z^n\neq\emptyset$.
If there exists a well-marked face-chain rooted at $\bar{F}$, then there exists an optimal solution of \eqref{eq-tool} for $w=\bar{w}$ that is dyadic.
\end{theorem}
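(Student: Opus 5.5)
We proceed by induction on the length $k$ of the well-marked face-chain $(F_0,F_1,\ldots,F_k)$ rooted at $F_0=\bar F$. If $k=0$, then $\bar F=F_k$ is compliant, and $\bar F$ is the $\bar w$-optimal face, so \eqref{eq-tool} has a dyadic optimal solution for $w=\bar w$ by the definition of compliance. For $k\geq 1$, the truncated chain $(F_1,\ldots,F_k)$ is again well-marked, now rooted at $F_1$. We also note that $F_1\cap\Z^n\neq\emptyset$, since $F_1\supseteq F_0$ and $F_0$ contains an integral point. By induction, for every integral weight $w'$ whose optimal face is $F_1$, \eqref{eq-tool} has a dyadic optimal solution. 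The task is to transfer this back to $\bar w$, using the marker $(\rho,\hat\i)$ attached to the step $F_0\subset F_1$.

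The transfer goes as follows. Fix an integral point $x^0\in\bar F$, and fix a strictly complementary optimal dual solution $\bar y$ for $\bar w$, so that $\operatorname{supp}(\bar y)=I(F_0)$. We look for a decomposition
\[
\bar w = w' + \mu\,\row_{\hat\i}(A)
\]
with $\mu\geq 0$ and with $F_1$ contained in the $w'$-optimal face. Two facts then follow from (g1) and (g2).

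\emph{$\mu$ is dyadic.} The weight $w'$ is constant on $\aff(F_1)$. Since $\rho\in\aff(F_1)$ and $x^0\in\aff(F_0)\subseteq\aff(F_1)$, we get $w'^\top(\rho-x^0)=0$. Hence
\[
\bar w^\top(\rho-x^0)=\mu\,\row_{\hat\i}(A)(\rho-x^0)=\mu\bigl(\row_{\hat\i}(A)\rho-b_{\hat\i}\bigr)=\pm\mu\, 2^p ,
\]
where we used $\row_{\hat\i}(A)x^0=b_{\hat\i}$, which holds because $\hat\i\in I(F_0)$. The left-hand side is an integer, since $\bar w$, $\rho$ and $x^0$ are integral. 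So $\mu\in 2^{-p}\Z$.

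\emph{Reassembling a solution for $\bar w$.} Replace $\bar w$ by $2^p\bar w$; this is harmless, because dyadic optimal solutions scale by powers of two. After this, $w'$ is integral, and the induction hypothesis (applied via compliance along the chain) gives a dyadic optimal $y'$ for $w'$ with support in $I(F_1)$. Set $y=y'+\mu\,\1_{\hat\i}$. Then $y\geq\0$ and $A^\top y=\bar w$. Moreover, $\operatorname{supp}(y)\subseteq I(F_1)\cup\{\hat\i\}\subseteq I(F_0)$. Complementary slackness against $x^0\in\bar F$ therefore shows that $y$ is optimal for $\bar w$, and $y$ is dyadic.

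The main obstacle is producing the decomposition with $\mu\geq0$ and $F_1$ optimal for $w'$. I would first use the structure of an immediate superface. Since $\dim\aff(F_1)=\dim\aff(F_0)+1$, each constraint $j\in I(F_0)\setminus I(F_1)$ restricts on $\aff(F_1)$ to $c_j\varphi$ for one common affine functional $\varphi$ vanishing on $\aff(F_0)$, with $c_j\neq0$; by the choice of orientation, $c_j>0$ for all such $j$. Writing $\bar w=\sum_{i\in I(F_1)}\bar y_i\row_i(A)+\sum_{j\in I(F_0)\setminus I(F_1)}\bar y_j\row_j(A)$, I would then shift weight between the second group and $\row_{\hat\i}(A)$. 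The idea is to choose the combination so that the part not in the cone generated by $I(F_1)$ is a nonnegative multiple of $\row_{\hat\i}(A)$ alone. This may require first perturbing $\bar y$ within the optimal dual face, or enlarging $\bar w$ by an integral element of that cone, to absorb the remaining rows. This is the step I expect to need the careful argument from \cite{guenin25b} (Theorem~2.2 with Proposition~3.1). If a single-row decomposition turns out to be impossible, the fallback is to handle the rows of $I(F_0)\setminus I(F_1)$ one at a time through intermediate weights. In that fallback, the $\pm2^p$ condition on the marked row is what guarantees that the accumulated correction stays dyadic.
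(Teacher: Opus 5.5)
Your induction step depends entirely on the decomposition $\bar w=w'+\mu\,\row_{\hat\imath}(A)^\top$ with $\mu\geq 0$ and $F_1$ contained in the $w'$-optimal face. This decomposition need not exist, and there is no freedom in it. Any such $w'$ is constant on $\aff(F_1)$, so $(w')^\top r=0$ for $r=\rho-x^0$. This forces $\mu=\bar w^\top r/\row_{\hat\imath}(A)r$, and hence forces $w'$. Perturbing $\bar y$ or shifting weight among the rows of $I(F_0)\setminus I(F_1)$ cannot change it. Peeling those other rows off one at a time gives no dyadicity control either, since (g2) says nothing about them.

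Concretely, let $P=\{x\in\R^2:x_1\geq 0,\ x_2\geq 0,\ x_1+x_2\geq 0\}$, $\bar w=(1,2)^\top$, $F_0=\{\0\}$ and $F_1=\{x\in P:x_1=0\}$. Then $I(F_0)=\{1,2,3\}$ and $I(F_1)=\{1\}$. Take $\rho=(0,1)^\top$ and $\hat\imath=3$, so $\row_3(A)\rho=1$. Also $F_1$ is compliant, since every admissible $w$ is $(t,0)^\top$ with $t\in\Z_+$. So $(F_0,F_1)$ is well-marked. The forced values are $\mu=2$ and $w'=(-1,0)^\top\notin\cone\{(1,0)^\top\}$, so no admissible $w'$ exists. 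There is also a separate mismatch: your induction hypothesis covers only $w'$ whose optimal face \emph{equals} $F_1$, while the decomposition would only give $F_1\subseteq$ the $w'$-optimal face.

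The missing idea is to drop nonnegativity in the middle of the argument and restore it only at the end. The paper works with linear rather than conic combinations. Compliance of $F_k$ makes $\zA_k=\{\row_i(A):i\in I(F_k)\}$ a DGSC, hence a DGSS by Proposition~\ref{C2S}. Then, if $\zA_\ell$ is a DGSS, so is $\zA_{\ell-1}$. For integral $w^0\in\spn(\zA_{\ell-1})$, set $w^1=w^0-u\,\row_{\hat\imath}(A)^\top$ with $u=r^\top w^0/\row_{\hat\imath}(A)r$; this $u$ is exactly your $\mu$, and it is dyadic by (g2). The vector $w^1$ is orthogonal to $\nul(\zA_{\ell-1})+\spn\{r\}=\nul(\zA_\ell)$, so it lies in $\spn(\zA_\ell)$ with no sign or cone condition required. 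In the example, $w^1=(-1,0)^\top$ is perfectly acceptable.

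Propagating down the chain, $\bar w$ becomes a dyadic linear combination of the rows indexed by $I(F_0)$. That is, the affine hull of the optimal dual face contains a dyadic point. Theorem~\ref{density}, through Proposition~\ref{gCS}, then yields a dyadic point in the face itself.

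Your ``enlarge $\bar w$ by an element of the cone'' idea can be salvaged the same way. Adding a large multiple of $\sum_{i\in I(F_1)}\row_i(A)^\top$ pushes $w'$ into the relative interior of $\cone(\zA_1)$, which also fixes the exact-optimal-face issue. But subtracting that multiple back destroys nonnegativity, so it too needs the density theorem to close.
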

\noindent
We will work with the following primal dual pair,
\begin{equation}
\min\{w^\top x:Ax\geq b\}, \quad\quad\quad\;\;\label{gPrimal}
\end{equation}
\begin{equation}
\max\{b^\top y:A^\top y=w,y\geq\0\}.\label{gDual}
\end{equation}
\noindent 
Denote by $\D$ the dyadic rationals. We require the following (\cite{DGSS}, Lemma 2.2).
\begin{theorem}\label{density}
Let $P\subseteq\R^n$ be a rational polytope. Then $P\cap\D^n\neq\emptyset$ if and only if $\aff(P)\cap\D^n\neq\emptyset$.
\end{theorem}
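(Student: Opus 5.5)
The direction from left to right is immediate, since $P\subseteq\aff(P)$. For the converse, the plan is to show that once the rational affine space $\aff(P)$ contains a single dyadic point, dyadic points are dense in $\aff(P)$. Then we use the fact that $P$ has nonempty relative interior, which is open in $\aff(P)$, so that it must meet $\aff(P)\cap\D^n$.

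Concretely, suppose $d\in\aff(P)\cap\D^n$. In particular $P\neq\emptyset$, since otherwise $\aff(P)=\emptyset$. Write $\aff(P)=d+L$, where $L$ is the linear subspace parallel to $\aff(P)$. Because $P$ is a rational polytope, its vertices are rational and $\aff(P)$ is the affine hull of these vertices. Hence $L$ is spanned by rational vectors (differences of vertices), and so $L$ has a basis $u^1,\ldots,u^k$ of rational vectors. After multiplying each $u^i$ by a common denominator we may assume $u^i\in\Z^n$. Then for every choice of dyadic scalars $\lambda_1,\ldots,\lambda_k\in\D$, the point $d+\sum_{i\in[k]}\lambda_i u^i$ lies in $\aff(P)\cap\D^n$. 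This holds because $\D$ is a ring containing $\Z$, so integer combinations with dyadic coefficients of dyadic vectors remain dyadic.

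Next we use density. The map $\lambda\mapsto d+\sum_i\lambda_i u^i$ is a linear-algebraic homeomorphism from $\R^k$ onto $\aff(P)$, and $\D^k$ is dense in $\R^k$ because $\D$ is dense in $\R$. Hence the image of $\D^k$ is dense in $\aff(P)$. The relative interior of the nonempty convex set $P$ is a nonempty relatively open subset of $\aff(P)$. It therefore contains a point of the form $d+\sum_i\lambda_iu^i$ with $\lambda\in\D^k$, and this point lies in $P\cap\D^n$, as required.

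The only step needing care is the choice of an integral basis of $L$. This is exactly where the rationality of $P$ is used: for an irrational affine space, a single dyadic point need not give density of dyadic points. The rest is routine, namely nonemptiness of the relative interior and density of $\D$ in $\R$.
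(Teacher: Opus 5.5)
Your proof is correct. It handles the edge cases properly: $P=\emptyset$, where both sides fail, and $\dim P=0$, where $\aff(P)=P$. The paper does not prove this statement itself; it imports it from \cite{DGSS} (Lemma~2.2), so there is no in-paper argument to compare yours against. Your argument is a self-contained proof that makes the use of rationality explicit. The direction space of $\aff(P)$ has an integral basis, so one dyadic point in $\aff(P)$ gives a dense set of dyadic points there. The nonempty relative interior of $P$ is open in $\aff(P)$, so it must contain one of them.

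Your approach has one further advantage: it never uses boundedness. Replace ``$L$ is spanned by differences of rational vertices'' with ``$L$ is the kernel of the rational matrix of implicit equalities of $P$''. Then the same argument works verbatim for any nonempty rational polyhedron, since the relative interior is still nonempty and open in $\aff(P)$. This is the generality the paper actually needs when it invokes the result in Proposition~\ref{gCS}. There it is applied to $\zS=\{y\geq\0: A^\top y=w,\ y_i=0 \ \forall i\notin I(F)\}$, which need not be bounded for a general rational system $Ax\geq b$. For instance, take an implicit equality written as the two inequalities $x_1\geq 0$ and $-x_1\geq 0$.
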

\noindent 
We will leverage this to obtain a sufficient condition for \eqref{gDual} to have a feasible dyadic solution.
Consider the polyhedron $P=\{x:Ax\geq b\}$ and let $F$ be a face of $P$.
Assume $A$ has $m$ rows. 
Recall that $I(F)\subseteq[m]$ denotes the subset of the row indices of $A$ corresponding to the tight constraints for $F$.
\begin{proposition}\label{gCS}
Let $P=\{x:Ax\geq b\}$ be a rational polyhedron where $A$ is an $m\times n$ matrix.
Consider $w\in\Z^n$ and let $F\neq\emptyset$ denote the $w$-optimal face of $P$.
If there exists $\bar{y}\in\D^m$ where, $A^\top\bar{y}=w$ and $\bar{y}_i=0$ for every $i\in[m]\setminus I(F)$, then \eqref{gDual} has an optimal dyadic solution.
\end{proposition}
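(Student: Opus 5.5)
The plan is to restrict to the tight rows of $F$ and apply Theorem~\ref{density} to the polytope of dyadic-candidate dual solutions supported there. Let $I=I(F)$ and
\[
Q:=\{y\in\R^m: A^\top y=w,\ y\geq\0,\ y_i=0 \text{ for } i\in[m]\setminus I\}.
\]
By strong duality and complementary slackness, a vector $y$ is optimal for \eqref{gDual} if and only if $y$ is feasible and $y_i=0$ whenever row $i$ is not tight on the whole optimal face $F$. Hence $Q$ is exactly the set of optimal solutions of \eqref{gDual}. It is nonempty, since $F\neq\emptyset$ means the primal has an optimum, so the dual has one, and every dual optimum vanishes off $I$. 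It is rational. It is bounded: if not, it would have a nonzero recession direction $d\geq\0$ with $A^\top d=\0$ and support in $I$; this is harmless, and I would handle it as described in the next paragraph.

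I would take one of two routes to deal with possible unboundedness. The first is to intersect $Q$ with a large box $\{y\le M\1\}$. The result is a rational polytope $Q_M$ with the same affine hull as $Q$, provided $M$ is chosen large enough that the box meets the relative interior of $Q$. The second is to note that Theorem~\ref{density}, as used in \cite{DGSS}, works equally for polyhedra. I prefer the box route, since it uses the stated result verbatim.

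The key step is to show that $\aff(Q_M)$ contains a dyadic point. Define
\[
Q':=\{y: A^\top y=w,\ y_i=0 \ (i\notin I)\}.
\]
This affine space contains $\bar y\in\D^m$ by hypothesis. I need $\aff(Q)=Q'$, or at least that $\aff(Q)$ contains a dyadic point. To get this, I would choose a strictly complementary optimal dual solution $y^*$, with $y^*_i>0$ for all $i\in I$. This exists by strict complementarity (Goldman--Tucker), since $I$ is precisely the index set of rows tight on the optimal face. Then $y^*$ lies in the relative interior of $Q'\cap\R^m_+$ within $Q'$, because all coordinates in $I$ are positive and the others are forced to zero. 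So a neighbourhood of $y^*$ in $Q'$ lies in $Q$, and therefore $\aff(Q)=Q'$. Taking $M>\max_i y^*_i$ gives $\aff(Q_M)=Q'\ni\bar y$.

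Theorem~\ref{density} then yields a dyadic $y\in Q_M\subseteq Q$. This $y$ is a dyadic optimal solution of \eqref{gDual}.

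I expect the main obstacle to be the affine-hull identification, that is, ensuring that strict complementarity really gives positivity on all of $I(F)$. This relies on $F$ being the full $w$-optimal face, not merely a face contained in it; that is exactly the hypothesis of the statement. The rest is routine.
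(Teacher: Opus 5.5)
Your proposal is correct and follows the paper's proof: you identify the optimal dual set via complementary slackness, use strict complementarity to show its affine hull is $\{y: A^\top y=w,\ y_i=0\ (i\notin I(F))\}\ni\bar y$, and then invoke Theorem~\ref{density}. Your box truncation is a small but genuine improvement, because Theorem~\ref{density} is stated for polytopes and the paper applies it to the optimal dual set without checking boundedness. Your opening assertion that $Q$ is bounded is false in general (an implicit equality written as two opposite inequalities already gives a recession direction), so drop it and rely on the box argument.
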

\begin{proof}
The set of optimal solutions to \eqref{gDual} is given by,
\[
\zS=\{y\geq\0:A^\top y=w, y_i=0, \;\forall i\in[m]\setminus I(F)\}.
\]
Indeed, if $\bar{x}\in F$ then $\bar{x}$ and $\bar{y}$ satisfy CS conditions
for \eqref{gPrimal}, \eqref{gDual} if and only if $\bar{y}\in S$.
By Strict CS, there exists $\bar{y}\in \zS$ where $\bar{y}_i>0$ for every $i\in I(F)$.
Since the affine hull of a polyhedron is described by the solutions to the tight constraints,
\[
\aff(\zS)=\{y:A^\top y=w, y_i=0, \;\forall i\in[m]\setminus I(F)\}.
\]
Assume we have $\bar{y}\in\D^m$ as in the hypothesis.
Then $\bar{y}\in\aff(\zS)$ and by Theorem~\ref{density} there exists $y'\in \zS\cap\D^m$, i.e. \eqref{gDual} has an optimal dyadic solution.
\qed
\end{proof}
\noindent
Consider a set $S$ of vectors $a^1,\ldots,a^k\in\Z^m$.
Then $S$ is said to be a {\em Dyadic Generating Set for Subspaces} (DGSS) if
every integer point that is a linear combination of vectors in $S$ can be expressed as a linear dyadic combination of vectors in $S$.
Similarly, $S$ is said to be a {\em Dyadic Generating Set for Cones} (DGSC) if
every integer point that is a conic combination of vectors in $S$ can be expressed as a conic dyadic combination of vectors in $S$.
We require (\cite{DGSS}, Proposition 3.3),
\begin{proposition}\label{C2S}
Every DGSC is a DGSS.
\end{proposition}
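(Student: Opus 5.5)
We prove Proposition~\ref{C2S} directly from the two definitions. Let $S=\{a^1,\ldots,a^k\}\subseteq\Z^m$ be a DGSC, and let $b\in\Z^m$ be an integer point with $b=\sum_{i\in[k]}\mu_i a^i$ for some real $\mu_i$. We must write $b$ as a dyadic linear combination of $a^1,\ldots,a^k$. The idea is to shift $b$ by an integer vector that is already an obvious dyadic (indeed integral) combination of the $a^i$, so that the shifted point lies in $\cone(S)$. We then apply the DGSC property to the shifted point and shift back.

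The steps are as follows.
\begin{enumerate}[(1)]
\item Choose an integer $N\ge 0$ with $N\ge -\mu_i$ for all $i\in[k]$, for instance $N=\max\{0,\lceil -\min_i\mu_i\rceil\}$.
\item Set $c:=N\sum_{i\in[k]}a^i$. Then $c\in\Z^m$, since each $a^i$ is integral and $N\in\Z$.
\item Consider $b':=b+c=\sum_{i\in[k]}(\mu_i+N)a^i$. All coefficients $\mu_i+N$ are nonnegative, so $b'$ is a conic combination of vectors in $S$. Also $b'\in\Z^m$, as a sum of two integer vectors.
\item By the DGSC property there are dyadic $\lambda_i\ge 0$ with $b'=\sum_{i\in[k]}\lambda_i a^i$.
\item Hence $b=b'-c=\sum_{i\in[k]}(\lambda_i-N)a^i$. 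Each coefficient $\lambda_i-N$ is dyadic, because the dyadic rationals $\D$ form a ring containing $\Z$.
\end{enumerate}
Thus every integer point in the linear span of $S$ is a dyadic linear combination of vectors in $S$, and $S$ is a DGSS.

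The argument has no substantial obstacle. The only point needing care is step (2): the shift $c$ must be integral, so that $b'$ is an integer point and the DGSC hypothesis applies to it. Choosing $N$ to be an integer, and not merely a real number large enough to make the coefficients nonnegative, guarantees this.
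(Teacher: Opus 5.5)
Your proof is correct and complete. Translating $b$ by the integral vector $N\sum_{i\in[k]}a^i$, with $N\in\Z_+$ large enough, moves it into the cone. The DGSC property then gives nonnegative dyadic coefficients, and subtracting the integer $N$ from each coefficient keeps it dyadic.

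The paper gives no proof of this statement of its own; it quotes it from \cite{DGSS}, Proposition 3.3. So there is no internal argument to compare against. Your shift argument is the natural self-contained proof, and the integrality of $N$, which you flagged, is indeed the only point that needs care.
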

Note that the converse of Proposition~\ref{C2S} result does not hold. 
\begin{proof}[Theorem~\ref{tool}]
\setcounter{claimcounter}{0}
By hypothesis, there exists a well-marked face chain, say $(F_0,\ldots,F_k)$ where $F_0=\bar{F}$.
For every $\ell\in[k]$ define,
\[
\zA_{\ell}=\{row_i(A):i\in I(F_\ell)\},
\]
i.e. $\zA_\ell$ is the set of left-hand-sides of the tight constraints defining face $F_\ell$.
Since $F\cap\Z^n\neq\emptyset$, there exists an integer vector $v\in \bar{F}\subseteq F_0,\ldots,F_k$.
We write $\ker(\zA_\ell)$ for the vectors normal to every member of $\zA_\ell$.
For a vector space $\zS$ the orthogonal complement is denoted, ${\mathcal S}^\perp$.
\begin{myclaim}\label{cl-fco}
$\spn(\zA_\ell)^\perp=\nul(\zA_\ell)=\aff(F_\ell)-v$.
\end{myclaim}
\begin{proofofclaim}
Since $\aff(F_\ell)$ is the set of solutions to the tight constraints of $Ax\geq b$ for $F_\ell$, we have $\aff(F_\ell)-v=\nul(\zA_\ell)$.
The first equality follows from the Fundamental Theorem of Linear Algebra.
\end{proofofclaim}
\begin{myclaim}\label{cl-dgss}
$\zA_k$ is a DGSS.
\end{myclaim}
\begin{proofofclaim}
Since $(F_0,\ldots,F_k)$ is well-marked, $F_k$ is compliant.
Pick an arbitrary $w\in\Z^n$ that is a conic combinations of vectors in $\zA_k$.
Then observe that \eqref{gDual} with right-hand-side $w$ has an optimal solution.
Since $F_k$ is compliant, \eqref{gDual} has an optimal solution $\bar{y}\in\D^m$.
By CS, we have $\bar{y}_i=0$ for every $i\in[m]\setminus I(F_k)$.
But then $w=\sum_{i\in I(F_k)}\bar{y}_i\row_i(A)^\top$ where $\bar{y}\geq\0$.
Hence, $w$ can be expressed as a conic dyadic combinations of $\zA_k$.
Thus by definition, $\zA_k$ is a DGSC, and hence, a DGSS by Proposition \ref{C2S}.
\end{proofofclaim}
\begin{myclaim}\label{cl-done}
It suffices to prove that $\zA_0$ is a DGSS.
\end{myclaim}
\begin{proofofclaim}
Let $y'$ be a solution to \eqref{gDual} with right-hand-side $\bar{w}$.
By CS, $y'_i=0$ for every $i\in[m]\setminus I(F_0)$.
Since $\bar{w}=A^\top y'$, it follows that $\bar{w}$ is a linear combination of vectors in $\zA_0$.
If $\zA_0$ is a DGSS, then there exists $\bar{z}$, dyadic, for which
$\bar{w}=\sum_{i\in I(F_0)}\bar{z}_i\row_i(A)^\top$.
For every $i\in I(F_0)$ set $\bar{y}_i=\bar{z}_i$ and for every $i\in[m]\setminus I(F_0)$, set $\bar{y}_i=0$.
Then $\bar{y}$ satisfies the conditions in Proposition \ref{gCS},
thus \eqref{gDual} with right-hand-side $\bar{w}$, has an optimal dyadic solution.
\end{proofofclaim}
\begin{myclaim}\label{onestep}
It suffices to show for all $\ell\in[k]$ that if $\zA_\ell$ is a DGSS, then so is~$\zA_{\ell-1}$.
\end{myclaim}
\begin{proofofclaim}
By Claim \ref{cl-dgss}, $\zA_k$ is a DGSS.
If the statement in the claim holds, then by repeated applications we deduce that $\zA_0$ is a DGSS.
But then we are done by Claim \ref{cl-done}.
\end{proofofclaim}
\noindent
Thus we pick $\ell\in[k]$ for which $\zA_\ell$ is a DGSS.
Our goal will be to show that $\zA_{\ell-1}$ is also a DGSS.
To that effect pick an arbitrary $w^0\in\Z^n$ that is a linear combination of vectors in $\zA_{\ell-1}$.
It will suffice to show that $w^0$ can be expressed as a dyadic combination of vectors in $\zA_{\ell-1}$.
Since the face-chain $(F_0,\ldots,F_k)$ is well-marked, there exists $\rho\in\Z^n$ and $\hat\i\in I(F_{\ell-1}) \setminus I(F_{\ell})$ such that,
\begin{enumerate}[\;\;(g1)]
\item $\rho\in\aff(F_\ell)\setminus\aff(F_{\ell-1})$ and 
\item $\row_{\hat\i}(A)\rho-b_{\hat\i}=\pm2^p$ for some $p\in\Z_+$.
\end{enumerate}
Let $r=\rho-v$ and define,
\begin{equation}
w^1=w^0-u\row_{\hat\i}(A)^\top\quad\mbox{where}\quad u=\frac{r^\top w^0}{\row_{\hat\i}(A)r}. \label{w0w1}
\end{equation}
\begin{myclaim}\label{cl-span}
$w^1\in\spn(\zA_\ell)$.
\end{myclaim}
\begin{proofofclaim}
Since $w^0\in\spn(\zA_{\ell-1})$ and $\row_{\hat\i}(A)\in\zA_{\ell-1}$ we have $\spn(w^1)\subseteq\spn(\zA_{\ell-1})$.
Claim \ref{cl-fco} implies, $\nul(w^1)=\spn(w^1)^\perp\supseteq\spn(\zA_{\ell-1})^\perp=\nul(\zA_{\ell-1})$.
Moreover, $r\in\nul(w^1)$ since \eqref{w0w1} implies immediately that $r^\top w^1=0$.
Hence,
\begin{equation} \label{ker-eq}
\nul(w^1)\supseteq\nul(\zA_{\ell-1})+\spn\{r\}.
\end{equation}
Claim \ref{cl-fco} and (g1)  implies (a) $r\in\nul(\zA_{\ell})\setminus\nul(\zA_{\ell-1})$.
Claim \ref{cl-fco} also implies (b) $\dim(\nul(\zA_{\ell}))=\dim(F_\ell)=\dim(F_{\ell-1})+1=\dim(\nul(\zA_{\ell-1}))+1$.
(a) and (b) together with \eqref{ker-eq} implies that
$\nul(w^1)\supseteq\nul(\zA_\ell)$. 
Using Claim \ref{cl-fco} again yields, 
\[
\spn(w^1)=\nul(w^1)^\perp\subseteq\nul(\zA_\ell)^\perp=\spn(\zA_\ell), 
\]
as required
\end{proofofclaim}
\begin{myclaim}\label{cl-u}
$u$ is dyadic.
\end{myclaim}
\begin{proofofclaim}
Since $v,\rho\in\Z^n$ so is $r=\rho-v$.
Because, $w^0\in\Z^n$ we have $r^\top w^0\in\Z$, i.e. the nominator of $u$ in \eqref{w0w1} is integer.
Since $v\in F_\ell$ we have $\row_{\hat\i}(A)v=b_{\hat\i}$.
It follows from (g2) that $\row_{\hat\i}(A)r=\pm 2^p$ for some $p\in\Z_+$, i.e. the denominator of $u$ in \eqref{w0w1} is a power of two.
\end{proofofclaim}
\noindent
By Claim \ref{cl-u}, $w^1$ is dyadic.
Hence, $w^1=\frac1{2^p}w^2$ where $w^2\in\Z^n$ and $p\in\Z_+$.
By Claim \ref{cl-span}, $w^1\in\spn(\zA_{\ell})$ and hence, $w^2\in\spn(\zA_{\ell})$.
Since by hypothesis, $\zA_{\ell}$ is a DGSS, there exists a dyadic $\bar{z}$ for which,
\[
w^2=\sum_{i\in I(F_\ell)}\bar{z}_i\row_i(A).
\]
It follows from \eqref{w0w1} that,
\[
w^0=u\row_{\hat\i}(A)^\top+\sum_{i\in I(F_\ell)}\frac1{2^p}\bar{z}_i\row_i(A).
\]
Since $\bar{z}$ is dyadic, and by Claim \ref{cl-u}, $u$ is dyadic,
$w^0$ is a dyadic combinations of vectors in $\zA_\ell\subset\zA_{\ell-1}$ and of $\row_{\hat\i}(A)\in\zA_{\ell-1}$.
\qed
\end{proof}

\subsection{The proof}
\setcounter{claimcounter}{0}
We are now ready for the proof of our key result.
\begin{proof}[Theorem \ref{main-tech}]
Let $n=|E(\zC)|$ and let $p=|\zC|$. 
Let $M$ be the matrix where the rows correspond to the characteristic vectors of $\zC$.
Let $A$ be the matrix obtained from $M$ by adding as additional rows the identity matrix $I_n$.
Then $A$ is an $m\times n$ matrix where $m=p+n$.
Let $b$ be the vector with $m$ entries where $b_i=1$ if $i\in[p]$ and $b_i=0$ otherwise. Then $P(\zC)=\{x:Ax\geq b\}$.

Let $F_0=\bar{F}$ denote the $w$-optimal face of $P(\zC)$.
Let us choose $k\in\Z_+$ maximum for which $(F_0,F_1,\ldots,F_k)$ is a face-chain rooted at $\bar{F}$ with the property that
for every $\ell\in[k]$, there exists $\rho\in\Z^n$ and $\hat{\i}\in I(F_{\ell-1})\setminus I(F_\ell)$ 
such that properties (g1), (g2) are satisfied (such a chain always exists as we can pick $k=0$).
If $F_k$ is compliant then $(F_0,F_1,\ldots,F_k)$ is a well-marked face-chain and it follows from Theorem \ref{tool} that \eqref{eq-tool} has an optimal dyadic solution as required.
Furthermore, we leave it as an exercise to prove that \eqref{eq-tool} has an optimal dyadic solution if and only if \eqref{dual} does.
Thus we will assume that $F_k$ is not compliant, i.e., 
there exists $w'\in\Z_+^{E(\zC)}$ for which $F_k$ is contained in the $w'$-optimal face  $F'$ and \eqref{eq-tool} does not have an optimal dyadic solution for right-hand-side $w'$.
Since $\bar{F}\subseteq F_k$ we have $v\in F_k$.
If $B$ is a $w$-minimal cover, then $\chi(B)\in\bar{F}$.
Hence, $B\in F_k$, in particular $B$ is a $w'$-minimum cover.
It follows that $w'$ is minimal.
Then by Proposition \ref{small}, $\sigma(F_k,P(\zC))\geq 7$ and by hypothesis $F_k$ is $v$-compatible.
Since $w'$ is minimal none of the non-negativity constraints are tight for $F_k$. 
Hence, for some subset $\zS\subseteq\zC$ the tight constraints for $F_k$ are $x(S)=1$ for $S\in\zS$.
Let $\zL$ denote the set of faces of dimension $1$ containing $v$.

\vspace{0.05in}\noindent{\bf Claim.}
For every $S\in\zS$ there exists $L\in\zL$ and $\rho\in L\cap Z^n$ with 
\[
\sum_{j\in S}\rho_j\in\{2,3\}.
\]
\begin{proofofclaim}
We have $S\in\zS$ and let $H_S=\{x\in \R^n: x(S)=1\}$. 
Every $L\in\zL$ is either a line segment, or a ray.
Suppose there exists a ray $L\in\zL$ where $L\not\subset H_S$.
Since $\rec(P(\zC))=\R_+^{E(\zC)}$, $L=v+\R_+\1_j$ where $j\in S$.
But then $L,\rho=v+\1_j$ are as required.
Thus we may assume all rays of $\zL$ are contained in $H_S$.
Since $P(\zC)$ is full dimensional, it follows from Proposition~\ref{cone} that
there exists a line segment $L\in\zL$ where $L\not\subset H_S$.
Denote by $w$ the vertex of $L$ distinct from $v$.
Then $w\notin H_S$ and $w=\chi(B)$ for some $B\in b(\zC)$.
It follows that $|B\cap S|>1$.
Proposition \ref{3-neighbour} then implies that there exists $B'\in b(\zC)$
with $|B'\cap S|=3$ where $\chi(B')$ is adjacent to $v$.
Let $L'\in\zL$ denote the $1$-dimensional line segment containing $\chi(B')$.
Then $L',\rho=\chi(B')$ are as required.
\end{proofofclaim}
\noindent
Construct $\zL'\subseteq\zL$ as follows: 
for every $S\in\zS$, include the face $L$ in the Claim.
Observe that by construction $\zL'$ is $(F,v)$-saturating.
Since $F_k$ is $v$-compatible, there exists $L\in\zL'$ for which $L\cap F_{k+1}\setminus\{v\}\neq\emptyset$ where $F_{k+1}$ is some immediate superface of $F_k$.
$L$ arises in the Claim and consider the corresponding constraint $x(S)\geq 1$ and the corresponding $\rho\in L\cap Z^n$.
Denote by $\hat{\i}$ the index of $A$ corresponding to constraint $x(D)\geq 1$.
Then (g1), (g2) hold for $\ell=k+1$ and $(F_0,\ldots,F_{k+1})$ contradicts our choice of $k$.
\qed

\end{proof}
\section{Closing remarks.}\label{sec-closing}
In Proposition \ref{small} we proved Conjecture~\ref{corner-cn} for the binary case where the number of active dual variables is bounded by $6$.
In that case we actually prove that \eqref{dual} has a $\frac12$-integer solution. 
Suppose that we could improve the result from $6$ to some value $\alpha$.
It would immediately allow us to replace $7$ in Theorem~\ref{main-tech} by $\alpha+1$.
This in turn, using Proposition~\ref{reduction}, would allow us to replace $6$ by $\alpha$ in Theorem~\ref{main-th}.
So we in fact proved the following statement.
\begin{theorem}
If Conjecture~\ref{corner-cn} holds for all binary instances with at most $\alpha$ active constraints, then
it also holds for all binary instances where the corner is $(k,\ell)$-degenerate for $k(\ell+1)\leq\alpha$.
\end{theorem}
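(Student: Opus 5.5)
The plan is to rerun the argument of Section~\ref{sec-proof} with the constant $6$ replaced by $\alpha$ throughout. The hypothesis says Conjecture~\ref{corner-cn} holds for every binary clutter $\zC$ and minimal weights $w$ whose $w$-optimal face $F$ contains an integral corner and satisfies $\sigma(F)\leq\alpha$. This is the only place where Proposition~\ref{small} entered the proof of Theorem~\ref{main-tech}. We then obtain the statement in two steps, mirroring the derivation of Theorem~\ref{main-th}.

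\textbf{Step 1: the generalized Theorem~\ref{main-tech}.} We prove the variant in which ``$\sigma(F,P(\zC))\geq 7$'' is replaced by ``$\sigma(F,P(\zC))\geq\alpha+1$''. Fix a face-chain $(F_0,\ldots,F_k)$ rooted at $\bar{F}$ with $k$ maximum, subject to (g1), (g2) holding at each step. If $F_k$ is compliant, Theorem~\ref{tool} finishes the proof, using the equivalence between dyadic optimality for \eqref{eq-tool} and for \eqref{dual}.

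Otherwise there is a $w'$ whose $w'$-optimal face $F'\supseteq F_k$ admits no dyadic optimal dual. Since every $w$-minimum cover lies in $F_k\subseteq F'$, the weights $w'$ are minimal, and $F'$ contains the integral corner $v$. Applying the hypothesis to $w'$ forces $\sigma(F')\geq\alpha+1$.

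Here lies the one point needing care, and I expect it to be the main subtlety. In the paper the small case is applied to $F_k$ itself, so we must pass from $F'$ back to $F_k$. Since $F_k\subseteq F'$, we have $\sigma(F_k)\geq\sigma(F')\geq\alpha+1$. We also need $F'=F_k$, or at least that the compatibility argument can be run on $F_k$. As in the original proof, we simply note that $\sigma(F_k)\geq\alpha+1$ and that $F_k\supseteq\bar{F}$. The hypothesis of the generalized theorem then makes $F_k$ $v$-compatible.

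From here the Claim of Section~\ref{sec-proof} applies verbatim, since it uses only binarity, via Proposition~\ref{3-neighbour}, and the integral corner $v$. It produces, for each tight constraint $x(S)=1$ of $F_k$, a face $L\in\zL$ and an integral point $\rho\in L$ with $\rho(S)-1\in\{1,2\}$. These faces form an $(F_k,v)$-saturating set. By $v$-compatibility, one of them meets an immediate superface $F_{k+1}$ outside $v$, which yields (g1) and (g2) at step $k+1$ and contradicts the maximality of $k$.

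\textbf{Step 2: from the generalized theorem to the statement.} Let $v$ be a $(k,\ell)$-degenerate integral corner with $k(\ell+1)\leq\alpha$. Proposition~\ref{reduction}, applied with this $\alpha$, shows that every face $F\ni v$ with $\sigma(F)>\alpha$ is $v$-compatible. In particular this covers every $F\supseteq\bar{F}$ with $\sigma(F)\geq\alpha+1$, so Step 1 applies and gives a dyadic optimal solution of \eqref{dual}.

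One last check is needed: the ``binary instances with at most $\alpha$ active constraints'' in the hypothesis must match the instances produced in Step 1, namely a binary clutter, minimal weights, and an optimal face containing an integral corner with $\sigma\leq\alpha$. We should make sure the notion of active constraints coincides with $\sigma$ of the optimal face. This holds because minimal weights make no non-negativity constraint tight, as observed in the proof of Proposition~\ref{small}.
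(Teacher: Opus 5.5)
Your proposal is correct and follows the paper's own argument: in the proof of Theorem~\ref{main-tech} you replace Proposition~\ref{small} by the hypothesis, so that $7$ becomes $\alpha+1$, and then you apply Proposition~\ref{reduction} with this $\alpha$, exactly as in the derivation of Theorem~\ref{main-th}. Your observation that $F_k\subseteq F'$ gives $\sigma(F_k)\geq\sigma(F')\geq\alpha+1$ makes explicit a step the paper passes over, so the worry about needing $F'=F_k$ is unnecessary.
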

In other words, proving the conjecture for higher number of active constraints implies the conjecture for more degenerate corners.
\subsubsection*{Acknowledgments.} 
{\fontsize{9}{10.8}\selectfont 
We thank Ahmad Abdi, and Andr\'as Seb\H{o} for many lively discussions on the topic.
We are also grateful to Jim Geelen, for suggesting the proof of Proposition \ref{4-inter}.
This research was supported in part by Discovery Grants from the Natural Sciences and
Engineering Research Council (NSERC) of Canada.}
\subsubsection*{AI disclosure statement.} 
{\fontsize{9}{10.8}\selectfont 
The final draft of this manuscript was reviewed by Anthropic (Claude Fable 5.1, October 2026) and the authors edited the draft in response to such a review. The authors take full responsibility for all of the content of this paper.
\subsubsection*{Conflict of Interest.} 
{\fontsize{9}{10.8}\selectfont
 The authors declare that they have no competing interests.

\end{document}